


\documentclass[12pt, notitlepage]{amsart}
\usepackage{latexsym, amsfonts, amsmath, amssymb, amsthm, cite}
\pagestyle{headings}
\oddsidemargin -0.25in
\evensidemargin -0.25in
\textwidth 6.5in
\sloppy
\flushbottom
\parindent 1em
\marginparwidth 48pt
\marginparsep 10pt
\columnsep 10mm

\usepackage{graphicx}
\usepackage{mathrsfs}
\usepackage[linktocpage=true,colorlinks,citecolor=blue,linkcolor=blue,urlcolor=blue]{hyperref}
\usepackage{amssymb}
\usepackage{cite}
\usepackage{bbm}
\usepackage{amsmath}
\usepackage{latexsym}
\usepackage{amscd}
\usepackage{tikz}
\usepackage{amsthm}
\usepackage{mathrsfs}
\usepackage{url}
\usepackage[utf8]{inputenc}
\usepackage[english]{babel}
\usepackage{amsfonts}
\usepackage{mathtools}
\usepackage{comment}
\usepackage[colorinlistoftodos]{todonotes}

\numberwithin{equation}{section}
\setcounter{section}{0}


\def\Z{\mathbb Z}

\def\P{\mathbb P}

\def\E{\mathbb E}

\def\p {\mathbbm{1}_{\mathcal P}}

\def\CA{\mathcal{A}}
\def\CS{\mathcal{S}}

\newtheorem{theorem}{Theorem}[section]
\newtheorem{lemma}[theorem]{Lemma}

\newtheorem{proposition}[theorem]{Proposition}

\newtheorem{corollary}[theorem]{Corollary}

\theoremstyle{remark}
\newtheorem{remark}[theorem]{Remark}

\theoremstyle{definition}

\theoremstyle{remark}

\numberwithin{equation}{section}

\begin{document}
	\title{Central limit theorems for random multiplicative functions}
	\author{Kannan Soundararajan}
	\address{Department of Mathematics, Stanford University, Stanford, CA, USA}
	\email{ksound@stanford.edu}
	\author{Max Wenqiang Xu} \address{Department of Mathematics, Stanford University, Stanford, CA, USA}
	\email{maxxu@stanford.edu}
	\dedicatory{To Peter Sarnak on the occasion of his seventieth birthday}
	
\begin{abstract}  A Steinhaus random multiplicative function $f$ is a completely multiplicative function obtained by setting its values on primes $f(p)$ to be independent random variables distributed uniformly on the unit circle.  Recent work of Harper shows that $\sum_{n\le N} f(n)$ exhibits ``more than square-root cancellation," and in particular 
$\frac 1{\sqrt{N}} \sum_{n\le N} f(n)$ does not have a (complex) Gaussian distribution.  This paper studies $\sum_{n\in {\mathcal A}} f(n)$, where ${\mathcal A}$ is a 
subset of the integers in $[1,N]$, and produces several new examples of sets ${\mathcal A}$ where a central limit theorem can be established.  We also consider more general sums such as $\sum_{n\le N} f(n) e^{2\pi i n\theta}$, where we show that a central limit theorem holds for any irrational $\theta$ that does not have extremely good Diophantine approximations. 
\end{abstract}
\maketitle

\section{Introduction}

In recent years there has been a lot of progress in understanding the behavior of random multiplicative functions.  One motivation for studying such functions is that understanding these may help shed light on functions of interest in number theory such as Dirichlet characters or the Liouville and M{\" o}bius functions.  Two natural models for random multiplicative functions are (1) the Steinhaus model of a random completely multiplicative function $f(n)$ where the values $f(p)$ (on primes $p$) are chosen independently and uniformly from the unit circle, and (2) the Rademacher model of a multiplicative function $f(n)$ taking values $\pm 1$ on square-free integers and $0$ on integers having a square factor, with $f(p)$ chosen independently and uniformly from $\{ -1, 1\}$.   

 A fundamental question is to understand the distribution of the partial sums $\sum_{n\le N} f(n)$ for random multiplicative functions $f$ (either in the Steinhaus case or in the Rademacher case).  Since the values of $f$ at integers satisfy dependency relations, it is a challenging problem to understand this distribution.   A breakthrough result of Harper \cite{HarperLow} established that typically $\sum_{n\le N} f(n)$ is $o(\sqrt{N})$.  Note that $\sqrt{N}$ is the size of the standard deviation of $\sum_{n\le N} f(n)$, and thus Harper's result (which confirmed a conjecture of Helson  \cite{Helson}) exhibits ``more than square-root cancellation" in such partial sums.   

One of our goals in this paper is to explore the distribution of partial sums of random multiplicative functions when restricted to subsets ${\mathcal A}$ of $[1,N]$.  We shall give criteria and several examples of sets ${\mathcal A}$ where such partial sums satisfy a central limit theorem.  For simplicity, we describe our results in the Steinhaus setting, and sketch briefly (in Section~\ref{sec: rad}) corresponding results in the Rademacher case.  We begin with a simplified criterion for ${\mathcal A}$ where a central limit theorem holds (see Theorem~\ref{thm: a_n} for a more precise, but more technical, result).

\begin{theorem}\label{thm1.1}  
Let $N$ be large, and let $\mathcal{A}$ be a subset of $[1,N]$ with size 
\begin{equation} 
\label{1.1} 
|\CA|\ge N \exp(-\tfrac 13\sqrt{\log N \log \log N}).
\end{equation} 
Suppose that there exists a subset $\mathcal{S}\subset \mathcal{A}$ with size $|\mathcal{S}|=(1+o(1))|\mathcal{A}|$ 
satisfying the following criterion: 
\begin{equation} 
\label{1.2} 
\#\{ (s_1, s_2, s_3, s_4) \in {\mathcal S}^4: \ \ s_1 s_2 = s_3 s_4 \} = (2+o(1))|\mathcal{S}|^{2}. 
\end{equation} 
Then, as $f$ ranges over random multiplicative functions in the Steinhaus model, the quantity  
$$ 
\frac{1}{\sqrt{|\mathcal{A}|} } \sum_{n \in \mathcal{A}} f(n)
$$ 
is distributed like a standard complex normal random variable with mean $0$ and variance $1$. 
\end{theorem}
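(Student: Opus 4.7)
\emph{Plan.} My plan is the method of (mixed) moments. Since a standard complex Gaussian $Z$ (mean $0$, $\E|Z|^2=1$) is characterized by its moments $\E[Z^a\bar Z^b]=\mathbf 1[a=b]\,a!$, it suffices to prove for every pair of fixed nonnegative integers $a,b$ that
\[
\E\bigl[X^a \bar X^b\bigr] \longrightarrow \mathbf 1[a=b]\cdot a!, \qquad X:=\frac{1}{\sqrt{|\CA|}}\sum_{n\in\CA} f(n).
\]
The fundamental Steinhaus orthogonality $\E\bigl[f(n_1)\cdots f(n_a)\overline{f(m_1)\cdots f(m_b)}\bigr]=\mathbf 1[n_1\cdots n_a = m_1\cdots m_b]$ reduces this to proving the purely combinatorial estimate
\[
M_{a,b}(\CA) := \#\{(\mathbf n,\mathbf m)\in\CA^a\times\CA^b : n_1\cdots n_a = m_1\cdots m_b\} = \bigl(\mathbf 1[a=b]\,a!+o(1)\bigr)|\CA|^{(a+b)/2}.
\]

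\emph{Unbalanced moments ($a\neq b$).} Suppose $a<b$. Parameterizing by $\mathbf n\in\CA^a$, the number of contributing $\mathbf m\in\CA^b$ is at most the number of ordered factorizations of $R:=\prod n_i\le N^a$ into $b$ positive integers, i.e.\ $d_b(R)\ll_b N^{o(1)}$. Thus $M_{a,b}(\CA)\ll |\CA|^a N^{o(1)}$. Since \eqref{1.1} forces $|\CA|\ge N^{1-o(1)}$, we have $|\CA|^{(b-a)/2}\gg N^{o(1)}$, which gives $M_{a,b}(\CA)=o(|\CA|^{(a+b)/2})$ as required.

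\emph{Balanced moments ($a=b=k$): the diagonal.} I would write $M_{k,k}(\CA)=D_k+O_k$, where $D_k$ counts ``diagonal'' tuples, those for which $(m_1,\ldots,m_k)$ is a permutation of $(n_1,\ldots,n_k)$. Since the number of $k$-tuples from $\CA^k$ with some repeated coordinate is $O(|\CA|^{k-1})=o(|\CA|^k)$, the diagonal cleanly contributes $D_k=k!|\CA|^k(1+o(1))$.

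\emph{Balanced moments: bootstrapping from $k=2$.} The remaining task, showing $O_k=o(|\CA|^k)$, is the crux of the argument. Since $|\CS|=(1+o(1))|\CA|$, tuples with at least one coordinate in $\CA\setminus\CS$ contribute only $o(|\CA|^k)$ to $M_{k,k}(\CA)$, so I would work with $\CS$. The hypothesis \eqref{1.2} gives $M_{2,2}(\CS)=(2+o(1))|\CS|^2$, and subtracting the trivial $2|\CS|^2$ diagonal shows the $k=2$ off-diagonal is $o(|\CS|^2)$. To propagate this to general $k$, I would argue as follows: given an off-diagonal tuple with $s_1\cdots s_k = t_1\cdots t_k$ and $\{t_j\}\neq\{s_i\}$ as multisets, cancel all common factors to obtain a shorter ``primitive'' identity on disjoint multisets. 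Induction on $k$ then extracts a two-variable collision, bounded by the $o(|\CS|^2)$ estimate just established, from the remaining $k-2$ coordinates that range freely in $\CS$, giving an overall bound $o(|\CS|^2)\cdot|\CS|^{k-2}=o(|\CS|^k)$. The main obstacle here is carrying out this combinatorial reduction cleanly: multiplicities in the multisets, and the many ways a long identity can factor through a shorter primitive one, must be tracked carefully so that the error terms genuinely compound.
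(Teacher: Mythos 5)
Your plan is the method of moments, whereas the paper proves Theorem~\ref{thm1.1} by a quantitative martingale central limit theorem (Theorem~\ref{thm2.4}, following McLeish) applied to the decomposition of the sum according to the largest prime factor $P(n)$; the whole point of that route is that it requires control of \emph{only} fourth-moment quantities (conditions (2) and (3) of Theorem~\ref{thm: a_n}), together with the mild bound of Lemma~\ref{lem3.1: large prime}, which is where hypothesis \eqref{1.1} enters.

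The step you yourself single out as the crux --- bootstrapping the $k=2$ off-diagonal bound to all $k$ --- is not merely delicate; it fails. At the combinatorial level, an off-diagonal identity $s_1\cdots s_k=t_1\cdots t_k$ need not factor through any two-variable sub-identity $s_is_j=t_lt_m$: already for $k=3$ one can have $s_1=ab$, $s_2=cd$, $s_3=ef$ and $t_1=af$, $t_2=cb$, $t_3=ed$, with no two-fold sub-collision to extract. More decisively, the hypotheses of Theorem~\ref{thm1.1} do not imply that the higher moments converge to the Gaussian moments at all: the Remark following Remark~\ref{rem: 4.3} shows that for $\CA=[x,x+y]$ with $x/(\log x)^{\frac32\log 3-1-\epsilon}\le y\le x/(\log x)^{2\log 2-1+\epsilon}$ and $\CS$ the integers in $\CA$ with a typical number of prime factors, condition \eqref{1.2} holds while $\E\bigl[|\sum_{n\in\CS}f(n)|^6\bigr]\gg y^{3}(\log x)^{c}$ for some $c>0$; equivalently, your $O_3$ is $\gg|\CS|^3(\log x)^{c}$ rather than $o(|\CS|^3)$. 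So $M_{3,3}$ does not tend to $6|\CS|^3$ under the stated hypotheses, and no amount of care with the multiset combinatorics can rescue the computation. The conclusion of the theorem (convergence in distribution) is nonetheless true, precisely because the martingale CLT does not require moment convergence. Your treatment of the unbalanced moments and of the diagonal is fine, but those are not where the difficulty lies.
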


Here and below, when we say ``distributed like" we mean convergence in distribution as the parameter $N$ tends to infinity.
Recall that a real normal random variable $W$ with mean $0$ and variance $\sigma^{2}$ is given by 
$$ 
\P(W\le t) = \frac{1}{\sigma\sqrt{2\pi}} \int_{-\infty}^{t}e^{-\frac{x^{2}}{2\sigma^{2}}}dx . 
$$ 
A standard complex normal random variable $Z$ with mean $0$ and variance $1$ is given by 
$Z=X+iY$ where $X$ and $Y$ are two independent real normal random variables with mean 
$0$ and variance $\frac 12$.

In Theorem \ref{thm1.1} the condition \eqref{1.1} is very mild and usually we are interested in much larger sets ${\mathcal A}$.  It can also be weakened further, as in our more precise version Corollary~\ref{cor: indicator} below.  The criterion in \eqref{1.2} is more important, and may be viewed as a fourth moment condition.  The quantity in \eqref{1.2} may be thought of as the {\sl multiplicative energy} of the set ${\mathcal S}$, defined by 
\begin{equation} 
\label{1.3} 
E_{\times}(\mathcal S) = \# \{ (s_1, s_2, s_3, s_4) \in {\mathcal S}^4: \ \ s_1 s_2 =s_3 s_4 \}. 
\end{equation} 
Since there are always trivial solutions $s_1=s_3$ and $s_2=s_4$ (or $s_1= s_4$ and $s_2=s_3$), we see that $E_\times (\mathcal S) 
\ge 2 |{\mathcal S}| (|{\mathcal S}|-1) + |{\mathcal S}| = 2 |{\mathcal S}|^2 -|{\mathcal S}|$.  Thus the condition \eqref{1.2} asks for the multiplicative energy to be as small as it possibly can be.   We also point out the flexibility in choosing a subset ${\mathcal S}$ of ${\mathcal A}$ which can be quite useful because even if $|{\mathcal S}| \sim |{\mathcal A}|$ 
it can happen that $E_\times({\mathcal A})$ is much bigger than $E_\times({\mathcal S})$.


 There are several results in the literature establishing a central limit theorem for random multiplicative functions restricted to suitable subsets ${\mathcal A}$ of $[1, N]$.  
 For instance Hough \cite{Hough} considered the set ${\mathcal A}$ of integers with exactly $k$ prime factors for any fixed $k$,  and Harper\cite{Harper} extended this to allow 
 any $k=o(\log \log N)$.  Short intervals $[x,x+y]$ were considered in the work of Chatterjee and Soundararajan \cite{CS} (to be precise, they worked in the Rademacher 
 setting) who showed that a central limit theorem holds provided $y= o(x/\log x)$ (and with a technical condition that $y\ge x^{\frac 15}\log x$ should be suitably 
 large, so that $[x,x+y]$ contains many square-free integers).   Recent work of Klurman, Shkredov and Xu \cite{KSX} considers  the set of polynomial values $\{P(n):1\le n \le N\}$ 
 where $P(x)\in \Z[x]$ is a polynomial not of the form $w(x+c)^{d}$ for integers $w$, $c$ and $d$.   Using Theorem~\ref{thm1.1} we give several new examples of subsets ${\mathcal A}$ where a central limit theorem holds.

 \begin{corollary}\label{thm: 2log2-1 stein}  Let $x$ and $y$ be large, with $y\le x/(\log x)^{\alpha +\epsilon}$ where $\alpha= 2\log 2 -1$, and $\epsilon>0$ is arbitrary.   Then, as 
 $f$ varies over Steinhaus random multiplicative functions, the quantity 
$$ 
\frac{1}{\sqrt{y}} \sum_{ x\le n \le x+y} f(n), 
$$ 
is distributed like a standard complex normal random variable with mean $0$ and variance $1$.
\end{corollary}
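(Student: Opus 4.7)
The plan is to apply Theorem~\ref{thm1.1} to $\mathcal{A} = [x, x+y] \cap \mathbb{Z}$. Since $|\mathcal{A}| = y + O(1)$ and $y$ is large, condition \eqref{1.1} is immediate. The substance of the proof is producing a subset $\mathcal{S} \subset \mathcal{A}$ with $|\mathcal{S}| = (1+o(1))y$ and multiplicative energy $E_\times(\mathcal{S}) = (2+o(1))|\mathcal{S}|^2$.

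I would parametrize collisions in the standard way: any $(s_1, s_2, s_3, s_4) \in \mathcal{A}^4$ with $s_1 s_2 = s_3 s_4$ factors uniquely as $s_1 = da$, $s_3 = db$, $s_2 = bc$, $s_4 = ac$ with $\gcd(a, b) = 1$; the trivial quadruples correspond to $a = b = 1$ or $d = c$ and total $2|\mathcal{A}|^2 - |\mathcal{A}|$. For nontrivial quadruples one has $a > b \ge 2$, and $s_i \in [x, x+y]$ forces $a - b \le by/x$ together with $d, c \in [x/b, (x+y)/a]$, an interval of length $\le y/a$. Writing $N(a, b)$ for the number of integers in that interval, the nontrivial contribution equals $2 \sum_{a > b, \gcd(a, b) = 1} N(a, b)(N(a, b) - 1)$; a direct summation yields
\begin{equation*}
E_\times(\mathcal{A}) - (2|\mathcal{A}|^2 - |\mathcal{A}|) \ll \frac{y^3 \log y}{x},
\end{equation*}
which is $o(y^2)$ exactly in the Chatterjee--Soundararajan range $y = o(x/\log x)$. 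For the Corollary's wider range $y \le x/(\log x)^{\alpha + \epsilon}$ with $\alpha = 2\log 2 - 1 < 1$, this naive bound fails by a polylogarithmic factor and pruning is required.

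To push past the Chatterjee--Soundararajan threshold, I would discard from $\mathcal{A}$ those integers with anomalous multiplicative structure, specifically those possessing a divisor in a short window of relative length $\asymp y/x$---exactly the integers susceptible to the nontrivial collisions enumerated above. The density of integers with a divisor in such a short window is controlled by Ford--Tenenbaum-type theory, and the key coincidence is that this density crosses below $1$ precisely at relative window length $(\log x)^{-\alpha}$ with $\alpha = 2\log 2 - 1$; equivalently, $\alpha$ is the Hardy--Ramanujan/Sathe--Selberg large-deviation exponent at the saddle point $\Omega(n) \approx 2\log\log x$, where the divisor count becomes large enough to host multiple representations $n = bd$ with both factors close to elements of $\mathcal{A}$. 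Calibrating the pruning threshold to this window length ensures $|\mathcal{A} \setminus \mathcal{S}| = o(y)$ in the claimed range of $y$, while simultaneously forcing $N(a, b) \le 1$ in the surviving divisor structure so that the nontrivial energy collapses. The main obstacle is executing exactly this two-sided calibration---finding one single pruning rule that discards $o(y)$ elements of $[x, x+y]$ and eliminates every nontrivial multiplicative collision among the survivors, with the critical exponent emerging as $2\log 2 - 1$. Once $\mathcal{S}$ is in hand, \eqref{1.2} is verified and Theorem~\ref{thm1.1} delivers the standard complex Gaussian limit.
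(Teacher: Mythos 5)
Your reduction to the martingale criterion, the parametrization $m_1=ga$, $m_2=hb$, $n_1=gb$, $n_2=ha$ with $(a,b)=1$, and the resulting bound $\ll \delta y^2\log x$ (with $\delta=y/x$) for the off-diagonal energy of the full interval all match the paper, and your identification of $2\log 2-1$ as the Sathe--Selberg large-deviation exponent at $\Omega(n)\approx 2\log\log x$ is exactly the right heuristic (cf.\ Remark~4.3 of the paper). But the heart of the corollary is precisely the step you defer: you write that ``the main obstacle is executing exactly this two-sided calibration,'' and that calibration is never carried out. Worse, the target you set for the pruning is both ill-posed and unattainable. ``Possessing a divisor in a short window of relative length $\asymp y/x$'' does not single out a small exceptional set (every $n$ has divisors in many such windows, and by Ford's theorems the integers with a divisor in even a single dyadic block $[D,2D]$ already have density $(\log D)^{-0.086\ldots}$, so unions over the relevant range of $D$ capture almost all of $[x,x+y]$). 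And demanding that the surviving set have \emph{no} nontrivial collisions (``forcing $N(a,b)\le 1$'') is far stronger than needed and is false for any density-one subset: the correct goal is only that the nontrivial energy be $o(y^2)$. Your appeal to a ``density crossing below $1$'' at window length $(\log x)^{-\alpha}$ also misstates Ford's results; $\log 4-1$ is the threshold at which $H(x,y,y(1+\eta))\asymp x\eta$, not a density-one threshold.

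What the paper actually does is prune by the number of prime factors: $\mathcal S=\{n\in[x,x+y]:\Omega(n)\le(1+\epsilon)\log\log x\}$. Then $|\mathcal A\setminus\mathcal S|=o(y)$ by Shiu's theorem applied to $\exp(\epsilon'\Omega(n))$, and the off-diagonal energy is bounded by inserting the weight $2^{2K-\Omega(g)-\Omega(h)-\Omega(a)-\Omega(b)}\ge 1$ (with $K=(1+\epsilon)\log\log x$) and applying Shiu's theorem to the resulting weighted sums over $a,b$ and then over $g,h$, giving $\ll 2^{2K}\delta y^2(\log\log x)/\log x=o(y^2)$ exactly when $y\le x/(\log x)^{2\log 2-1+2\epsilon}$. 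Without this (or an equivalent) computation your argument does not establish the corollary beyond the Chatterjee--Soundararajan range. A secondary error: condition \eqref{1.1} of Theorem~\ref{thm1.1} is \emph{not} immediate, since it compares $|\mathcal A|=y$ to $N\exp(-\frac13\sqrt{\log N\log\log N})$ with $N\asymp x$, which fails whenever $y$ is much smaller than $x$; one must instead invoke Corollary~\ref{cor: indicator} together with Lemma~\ref{lem3.1: large prime}, whose smooth/large-prime bound is relative to the interval length $y$ rather than to $x$.
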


Corollary \ref{thm: 2log2-1 stein} improves upon the result established by Chatterjee and Soundararajan \cite{CS}, where $y$ was required to be $o(x/\log x)$.  The range $y=o(x/\log x)$ is the threshold at which the fourth moment of $\sum_{x \le n\le x+y} f(n)$ matches that of a Gaussian,  and it was tentatively suggested in \cite{CS} that this might 
be the largest range in which a central limit theorem holds.   Corollary \ref{thm: 2log2-1 stein}  shows that larger values of $y$ are permissible; the 
source of the improvement is the flexibility in Theorem \ref{thm1.1} of choosing an appropriate density $1$ subset ${\mathcal S}$  of $[x,x+y]$ on which the 
fourth moment can be shown to match that of a Gaussian.  It is conceivable that the central limit theorem holds for still larger values of $y$, and perhaps for all $y \le x/(\log x)^{\epsilon}$ for any $\epsilon >0$.  On the other hand, Harper's work \cite{HarperLow} shows that $\frac{1}{\sqrt{y}} \sum_{x\le n\le x+y} f(n)$ is not Gaussian if $y \ge x/(\log \log x)^{\frac 12-\epsilon}$  (and indeed some modifications to his ideas permit even the wider range $y \ge x/\exp((\log \log x)^{\frac 12-\epsilon})$.

 Our second example concerns integers that are sums of two squares (one could consider more general sifted sequences, and we comment on this briefly in \S 5).  While we are unable to treat the set of integers in $[1,x]$ that are sums of two squares, we come close to this and can treat the sums of two squares in any short interval $[x,x+y]$ so long as $x^{\frac{1}3} < y =o(x)$.   Here our interest is in allowing $y$ as 
 large as possible, and the lower bound on $y$ is imposed in order to guarantee that $[x,x+y]$ contains the expected number of integers (namely about $y/\sqrt{\log x}$) 
 that are sums of two squares (see Hooley \cite{Hooley} who shows that the exponent $\frac 13$ may be replaced with the best available result on the circle problem). 
 
 \begin{corollary}\label{thm: sum of two squares}  Let $x$ and $y$ be large with $y$ in the range $x^{\frac 13} < y = o(x)$.  
Let $\CA$ denote the set of integers in $[x,x+y]$ that are the sum of two squares.   In the given range of $y$, we have  $|\CA|\asymp \frac{y}{\sqrt{\log x}}$, 
and moreover, as $f$ varies over Steinhaus random multiplicative functions, the quantity 
\[\frac{1}{\sqrt{|\mathcal{A}|} } \sum_{n \in \mathcal{A}} f(n) \]
is distributed like a standard complex normal random variable with mean $0$ and variance $1$. 
\end{corollary}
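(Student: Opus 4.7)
The plan is to apply Theorem~\ref{thm1.1}, or more precisely its refined form (Corollary~\ref{cor: indicator}) that accommodates smaller sets, to the set $\CA$ of sums of two squares in $[x,x+y]$.  Three items need attention: the size estimate $|\CA|\asymp y/\sqrt{\log x}$, the applicability of the framework to a set of this density, and the multiplicative-energy bound $E_{\times}(\CS) = (2+o(1))|\CS|^{2}$ for a density-one subset $\CS\subset\CA$.

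The size estimate is a classical theorem of Hooley~\cite{Hooley}, valid for $y\ge x^{1/3}$ (the exponent comes from the circle problem); the matching upper bound is a routine Selberg sieve.  Since $|\CA|$ can be as small as $x^{1/3}/\sqrt{\log x}$, which is far below the threshold in \eqref{1.1}, one must invoke the refined form of the theorem.

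The heart of the argument is bounding the multiplicative energy.  I parametrize non-trivial quadruples $(s_1,s_2,s_3,s_4)\in\CA^4$ with $s_1 s_2 = s_3 s_4$ in the usual way: with $a=\gcd(s_1,s_3)$, there exist $b\ge 1$ and coprime $(c,d)\neq(1,1)$ so that $s_1 = ac$, $s_3 = ad$, $s_2 = bd$, $s_4 = bc$.  The interval constraint forces $|c-d|\ll y\min(c,d)/x$ and restricts $a$ and $b$ each to an interval of length $\le y/\max(c,d)$.  The crucial structural observation is that, because $c$ and $d$ are coprime, the requirement that \emph{both} $ac$ and $ad$ be sums of two squares forces---in the generic case $\gcd(a,cd)=1$---each of $c$, $d$, and $a$ to be a sum of two squares; the same holds for $b$.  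Using the density $\sim 1/\sqrt{\log x}$ of sums of two squares in short intervals and the partial-summation identity $\sum_{k\le K,\,k\in\CA}k^{-1}\asymp\sqrt{\log K}$, one obtains
\begin{equation*}
E_{\times}(\CA) - (2|\CA|^{2}-|\CA|)\;\ll\;\sum_{k\ge x/y}\frac{yk}{x\sqrt{\log x}}\cdot\Bigl(\frac{y/k}{\sqrt{\log x}}\Bigr)^{2}\;\ll\;\frac{y^{3}}{x(\log x)^{3/2}}\sum_{\substack{k\ge x/y\\ k\in\CA}}\frac{1}{k}\;\ll\;\frac{y^{3}}{x\log x},
\end{equation*}
which is $o(|\CA|^{2})=o(y^{2}/\log x)$ exactly in the range $y=o(x)$.

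The main obstacle is making this heuristic rigorous in the non-generic cases: when $\gcd(a,cd)>1$, or when $c$ or $d$ carries a prime $\equiv 3\pmod 4$ to an odd power, the joint condition ``$ac,ad\in\CA$'' is strictly stronger than ``$a\in\CA$'' and depends delicately on the factorization of $cd$.  One needs uniform upper-bound sieves for sums of two squares in short intervals with prescribed divisibility constraints, and the error terms must not accumulate when summed over $(c,d)$.  The subset $\CS$ can be chosen to exclude a density-zero set of $n\in\CA$ with abnormally large $\equiv 3\pmod 4$ content, streamlining these uniform estimates while preserving $|\CS|=(1+o(1))|\CA|$.
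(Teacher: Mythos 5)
Your skeleton is the same as the paper's: apply Corollary~\ref{cor: indicator}, parametrize $s_1s_2=s_3s_4$ via Lemma~\ref{lem3.2: parameter}, and observe that coprimality of the inner pair forces all four parameters to be sums of two squares. (That last implication, by the way, holds unconditionally: for $p\equiv 3\pmod 4$ with $(c,d)=1$ at most one of $v_p(c),v_p(d)$ is positive, so requiring $v_p(a)+v_p(c)$ and $v_p(a)+v_p(d)$ both even forces all three valuations to be even. The ``non-generic case $\gcd(a,cd)>1$'' that you flag as the main obstacle is therefore not an obstacle at all.)

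The genuine gap is quantitative and sits in your displayed energy bound. The number of sums of two squares $d$ in an interval of length $\asymp yk/x$ around $k$ is $\ll (yk/x)/\sqrt{\log k}$, not $(yk/x)/\sqrt{\log x}$ (and for $k\le (x/y)^2$ that interval is too short for any density saving: Shiu's theorem needs the interval length to exceed the square root of its endpoint). Correcting this, your sum over $k$ becomes $\sum_{k\in\CA}\tfrac{1}{k\sqrt{\log k}}\asymp \log\log x$ rather than $\tfrac{1}{\sqrt{\log x}}\sum_{k\in\CA}\tfrac1k\asymp 1$: each dyadic block $k\asymp G$ contributes $\asymp \delta y^2/(\log x\cdot\log G)$, these sum like a harmonic series over scales, and you only get $E_\times(\CS)-2|\CS|^2\ll \delta y^2\log\log x/\log x$, which is $o(|\CA|^2)$ only for $y=o(x/\log\log x)$ --- short of the claimed range. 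To reach all $y=o(x)$ the paper takes $\CS$ to be the sums of two squares with $\Omega(n;e^{e^k})\le(\tfrac12+\epsilon)k$ at \emph{every} scale $e^{e^k}\in[x/y,x]$ (this is the right notion of ``typical,'' not your proposed bound on the $3\bmod 4$ content), which licenses inserting the weight $2^{(1+2\epsilon)k-\Omega(g)-\Omega(h)-\Omega(a)-\Omega(b)}$; running Shiu's theorem on the weighted sums improves the per-block factor from $(\log G)^{-1}$ to $(\log G)^{(1+2\epsilon)\log 2-2}$, and since $2-\log 2>1$ the dyadic sum now converges, giving a total $\ll \delta(\log 1/\delta)^{5/4}y^2/\log x=o(|\CA|^2)$. (The complementary range $x^{1/3}\le y\le x/(\log x)^3$ needs no such care: there the bound $\ll\delta y^2\log x$ from the argument of Proposition~\ref{prop:typical count}, ignoring the sums-of-two-squares condition entirely, already beats $|\CA|^2/\log x$.)
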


 If we restrict a Steinhaus random multiplicative function to its values on primes, then these values are independent random 
 variables and clearly the central limit theorem holds.  Our next example shows that the central limit theorem also holds for the 
 set of shifted primes $p+k$, for any fixed non-zero integer $k$.  
 
 \begin{corollary}\label{thm: shift primes}  Let $k$ be any fixed non-zero integer.  Let $N$ be large, and let ${\mathcal A}$ denote the set of 
 integers of the form $p+k$ in $[1, N]$.  As $f$ varies over Steinhaus random multiplicative functions,  
the quantity 
\[
\frac{1}{\sqrt{|\mathcal{A}|} } \sum_{n \in \mathcal{A}} f(n) 
\]
is distributed like a standard complex normal random variable with mean $0$ and variance $1$.
\end{corollary}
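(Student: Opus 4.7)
The plan is to apply Theorem~\ref{thm1.1} with $\mathcal{A}=\{p+k : p \text{ prime}, p+k \leq N\}$. The prime number theorem gives $|\mathcal{A}|\sim N/\log N$, so the size condition~\eqref{1.1} is easily satisfied. The substantive work is to exhibit a density-one subset $\mathcal{S}\subset\mathcal{A}$ with $E_\times(\mathcal{S})=(2+o(1))|\mathcal{S}|^2$.

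I would take $\mathcal{S}=\{n=p+k\in\mathcal{A} : P^+(n)>y\}$ with $y=N^{1/\log\log N}$, where $P^+(n)$ denotes the largest prime factor of $n$. Standard sieve bounds (Brun's sieve applied to the $y$-smooth integers in $[1,N]$, combined with the usual sieve estimate for primes within a set) give
\[
\#\{p\leq N : p+k \text{ is $y$-smooth}\}\ll \frac{N\,\rho(\log\log N)}{\log N} = o(\pi(N)),
\]
since the Dickman function $\rho(\log\log N)\to 0$ as $N\to\infty$. Hence $|\mathcal{S}|=(1+o(1))|\mathcal{A}|$.

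For the multiplicative energy, I would parametrize each ordered solution $(s_1,s_2,s_3,s_4)\in\mathcal{S}^4$ of $s_1s_2=s_3s_4$ uniquely as $(\alpha g,\gamma t,\gamma g,\alpha t)$ with $\gcd(\alpha,\gamma)=1$. The trivial configurations---namely $\alpha=\gamma=1$, or $g=t$---account for exactly $2|\mathcal{S}|^2-|\mathcal{S}|$ solutions. For the remaining nontrivial solutions (those with $\alpha\neq\gamma$ and $g\neq t$), the key observation is that $Q:=P^+(s_1s_2)>y$, and after relabeling within each side we may assume $Q\mid s_1$ and $Q\mid s_3$; this forces $Q\mid\gcd(s_1,s_3)=g$ since $\gcd(\alpha,\gamma)=1$. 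Writing $g=Qg'$, Selberg's upper bound sieve applied to the two pairs of linear forms $(\alpha Qg'-k,\,\gamma Qg'-k)$ in $g'$ and $(\alpha t-k,\,\gamma t-k)$ in $t$ (which involve disjoint variables) gives, for fixed $(Q,\alpha,\gamma)$, a count of $(g',t)$ bounded by
\[
\ll \frac{N^{2}}{Q\,\max(\alpha,\gamma)^{2}\,(\log N)^{4}}\,\mathfrak{S}(\alpha Q,\gamma Q,k)\,\mathfrak{S}(\alpha,\gamma,k).
\]
Summing over coprime $(\alpha,\gamma)$ with $\alpha\neq\gamma$ (using $\sum_{M}\phi(M)/M^{2}=O(\log N)$ and the average boundedness of the singular series) and over primes $Q$ with $y<Q\leq N$ (using $\sum_{Q>y}1/Q=\log\log\log N+O(1)$), the total nontrivial count is $O(N^{2}\log\log\log N/(\log N)^{3})=o(|\mathcal{A}|^{2})$, yielding $E_\times(\mathcal{S})=(2+o(1))|\mathcal{S}|^{2}$.

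The main obstacle is the combinatorial case analysis: subcases in which $Q$ additionally divides $\alpha$ or $\gamma$, or in which $Q$ has higher multiplicity in some $s_i$, must be handled separately, although each is a strict refinement of the main case with additional arithmetic constraints and therefore yields strictly better sieve bounds. The averaging of the singular series factors is standard. Once these technicalities are dispatched, Theorem~\ref{thm1.1} immediately delivers the complex Gaussian limit for $|\mathcal{A}|^{-1/2}\sum_{n\in\mathcal{A}}f(n)$.
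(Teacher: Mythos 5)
Your core strategy is the same as the paper's: parametrize $s_1s_2=s_3s_4$ as $(\alpha g,\gamma t,\gamma g,\alpha t)$ with $(\alpha,\gamma)=1$, and bound the nontrivial solutions by running a two-dimensional upper bound sieve twice --- once on the pair of linear forms in the $g$-variable, once on the pair in the $t$-variable --- before summing $\max(\alpha,\gamma)^{-2}$ over coprime pairs to pick up a single $\log N$. The paper does exactly this with $\mathcal{S}=\mathcal{A}$: the double sieve already saves $(\log N)^4$ against a trivial count of order $N^2\log N$, giving $O(N^2(\log\log N)^{O(1)}/(\log N)^3)=o(|\mathcal{A}|^2)$. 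So the smoothness restriction $P^+(n)>y$ and the extraction of the prime $Q\mid g$ buy you nothing (the factor $\sum_{Q>y}1/Q\asymp\log\log\log N$ is a loss, not a gain, relative to not introducing $Q$ at all), and the density step, while correct, is dead weight.

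The genuine gap is in the ranges of the sieves, and the $Q$-device is what makes it serious. An upper bound sieve for two linear forms in a variable running over an interval of length $X$ saves at most $(\log X)^2$, not $(\log N)^2$; if $X$ is bounded it saves nothing. Your claimed bound $\ll N^2\,\mathfrak{S}\,\mathfrak{S}/(Q\max(\alpha,\gamma)^2(\log N)^4)$ for fixed $(Q,\alpha,\gamma)$ is therefore false whenever the interval for $g'$, of length $N/(Q\max(\alpha,\gamma))$, is short --- and it can have length $1$ (take $g=Q$ prime with $\max(\alpha,\gamma)$ of size $N/Q$), in which case your bound is $<1$ while the true count can be $1$; summed over the enormous number of such triples this is not recoverable by a trivial bound. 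Even after discarding $Q$, one still needs to guarantee that \emph{both} sieving intervals have length $\geq N^{c}$. The paper does this with the observation that $\max(\alpha,\gamma)\cdot\max(g,t)$ equals one of the four $s_i$ and hence is $\leq N+|k|$, so that one of the two maxima is $\leq\sqrt{N+|k|}$; one then fixes the pair with the smaller maximum and sieves over the other two variables, whose ranges are $\geq\sqrt{N+|k|}$ so that $\log X\asymp\log N$. This reduction is the one substantive step beyond the sieve itself and is absent from your write-up; your deferred subcases $Q\mid\alpha$ or $Q\mid\gamma$ suffer from the same unexamined range problem rather than being automatic refinements. Dropping the $Q$ device and inserting the $\sqrt{N}$ reduction turns your argument into the paper's proof.
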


The examples in the three corollaries above give subsets of size $N/(\log N)^{\beta+\epsilon}$ in $[1,N]$ for 
which a central limit theorem holds, with the largest subset being the short interval result of Corollary \ref{thm: 2log2-1 stein} 
which permits $\beta =2\log 2 -1$.  What is the largest subset ${\mathcal A}$ of $[1,N]$ for which we can establish a central limit theorem 
for $\sum_{n\in {\mathcal A}} f(n)$?  As we indicated earlier, we expect (but cannot prove) that a central limit theorem holds for short intervals $[x,x+y]$ with $y= x/(\log x)^{\epsilon}$.  
The largest set that we have been able to find is given in the next corollary, and this set is related to the Erd{\H o}s multiplication table problem.  The construction is 
partly random, and based largely on the work of Ford \cite{Ford18}.


\begin{corollary}\label{thm:large subset}  For all large $N$, there exists a subset ${\mathcal A}$ of $[1,N]$ with
\[|
\mathcal{A}|\ge \frac{N}{(\log N)^{\theta} (\log \log N)^{7}}, \qquad \text{  where     }  \theta = 1- \frac{1+\log \log 4}{\log 4} = 0.0430\ldots
 \]
such that, for random Steinhaus multiplicative functions $f$, the quantity
\[
\frac{1}{\sqrt{|\mathcal{A}|} } \sum_{n \in \mathcal{A}} f(n)
\]
is distributed like a standard complex normal random variable with mean $0$ and variance $1$.
\end{corollary}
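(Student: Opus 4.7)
\emph{Proof proposal.} The plan is to build $\mathcal{A}$ from the image of the four-fold Erd\H{o}s multiplication table and then verify the hypotheses \eqref{1.1} and \eqref{1.2} of Theorem~\ref{thm1.1}. The exponent $\theta = 1 - (1+\log\log 4)/\log 4$ is the $k=4$ analogue of Ford's $\delta = 1 - (1+\log\log 2)/\log 2$ for the ordinary ($k=2$) multiplication table, and results of Koukoulopoulos generalising Ford's theorem show that the set of integers in $[1,N]$ representable as a product $a_1 a_2 a_3 a_4$ with each $a_i \le N^{1/4}$ has cardinality $\asymp N/(\log N)^\theta (\log\log N)^{3/2}$. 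This is comfortably above the lower bound required by the corollary; the extra room in $(\log\log N)^7$ will absorb the technical restrictions imposed below.

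The construction I would use is
\[
\mathcal{A} = \{ n \le N : n = a_1 a_2 a_3 a_4 \text{ with } a_i \in I_i \},
\]
for dyadic intervals $I_i = (y_i, 2y_i]$ satisfying $y_1 y_2 y_3 y_4 \asymp N$ and $\log y_i \asymp \tfrac14 \log N$. Combining the Koukoulopoulos count with Mertens--type sieves gives $|\mathcal{A}| \ge N/(\log N)^\theta (\log\log N)^7$, i.e.\ \eqref{1.1}. Next, let $\mathcal{S} \subseteq \mathcal{A}$ be the subset of $n$ whose prime factorisation is \emph{Ford-normal}, meaning that the counts $\omega(n; (e^j, e^{j+1}])$ lie within their usual envelopes for every relevant $j$; a Tur\'an--Kubilius second-moment argument yields $|\mathcal{S}| = (1+o(1))|\mathcal{A}|$, and each $n \in \mathcal{S}$ carries an essentially unique (up to reordering the four factors) factorisation $n = a_1 a_2 a_3 a_4$ with $a_i \in I_i$.

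The key step is to bound the multiplicative energy of $\mathcal{S}$. Given $(s_1, s_2, s_3, s_4) \in \mathcal{S}^4$ with $s_1 s_2 = s_3 s_4$, compare the prime factors of both sides inside each dyadic block $(e^j, e^{j+1}]$: unique factorisation in $\Z$ forces the multi-sets of primes to agree block by block, and Ford-normality then forces -- up to $o(|\mathcal{S}|^2)$ exceptional quadruples -- the canonical four-fold factorisations on the two sides to rearrange into each other, whence $\{s_1, s_2\} = \{s_3, s_4\}$. The exceptional quadruples are controlled by a standard divisor-moment / sieve estimate. This yields $E_\times(\mathcal{S}) = (2+o(1))|\mathcal{S}|^2$, which is \eqref{1.2}, and Theorem~\ref{thm1.1} then concludes the central limit theorem.

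The main obstacle is the multiplicative-energy step. A typical $n \in \mathcal{A}$ has \emph{many} representations as a four-fold product, so rigidity is far from automatic; one has to exploit the Ford-normal restrictions to pin down a canonical factorisation and then show that the block-by-block matching forced by $s_1 s_2 = s_3 s_4$ only rarely produces genuinely new factorisations. Calibrating the intervals $I_i$ so that $|\mathcal{S}|$ stays at the advertised density while simultaneously keeping the off-diagonal contribution to $E_\times(\mathcal{S})$ at $o(|\mathcal{S}|^2)$ is the delicate quantitative heart of the argument, and is what ultimately fixes the Koukoulopoulos-type exponent $\theta$; the $(\log\log N)^7$ slack absorbs the accumulated sieve error terms from the previous steps.
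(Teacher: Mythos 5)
Your construction is not the one that works here, and the decisive quantitative step is missing. First, the exponent $\theta = 1-\frac{1+\log\log 4}{\log 4}$ is not the density exponent of the four-fold multiplication table: integers $n\le N$ admitting a balanced factorisation $a_1a_2a_3a_4$ with all $a_i\asymp N^{1/4}$ have density about $(\log N)^{-Q(3/\log 4)}$, where $Q(\alpha)=\alpha\log\alpha-\alpha+1$, i.e.\ roughly $(\log N)^{-0.507}$ --- far sparser than the $(\log N)^{-0.043}$ asserted in the corollary. The number $\log 4$ enters for a different reason. The extremal set, which is Ford's construction in \cite{Ford18}, consists of integers $n\asymp N$ with about $\log\log N/\log 4$ prime factors, evenly distributed in the sense you call Ford-normal; the density of integers with $\Omega(n)\approx \alpha\log\log N$ is $(\log N)^{-Q(\alpha)+o(1)}$ and $Q(1/\log 4)=\theta$. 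Equivalently $\theta=\delta/2$ with $\delta$ the Erd{\H o}s--Ford multiplication table exponent, which matches the trivial upper bound coming from $|\mathcal A|^2\le |\mathcal A\cdot\mathcal A|\le |[1,N]\cdot[1,N]|$. So your set would satisfy \eqref{1.1} but not the size claimed in the statement.

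Second, and more seriously, the assertion that Ford-normality forces $E_\times(\mathcal S)=(2+o(1))|\mathcal S|^2$ is precisely the step that is not known. What Ford proves is $E_\times(\mathcal B)\ll |\mathcal B|^2(\log\log N)^4$: a product $n_1n_2$ of two elements of the extremal set can still have $(\log\log N)^{O(1)}$ representations, and the ``block-by-block matching'' you describe does not reduce this to $O(1)$; if it did, the corollary would hold with $(\log\log N)^{3/2}$ in place of $(\log\log N)^{7}$. The paper bridges this gap with an additional probabilistic device: pass to a random subset $\mathcal A\subset\mathcal B$ of relative density $\rho=(\log\log N)^{-5}$. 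Every off-diagonal quadruple contains at least three distinct elements, hence survives with probability at most $\rho^3$, so the expected number of off-diagonal solutions in $\mathcal A$ is at most $\rho^3E_\times(\mathcal B)\ll \rho\,(\log\log N)^4\,|\mathcal A|^2=|\mathcal A|^2/\log\log N=o(|\mathcal A|^2)$; this sparsification is also the source of the $(\log\log N)^7$ in the statement ($\tfrac32$ from Ford's construction plus $5$ from $\rho$, with room to spare). Without either a proof of your rigidity claim or a device of this kind, the argument does not close.
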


 Our proof of  Theorem \ref{thm1.1} applies more generally to weighted sums $\sum_{n\le N} a_n f(n)$ for suitable sequences of complex numbers $a_n$ 
 (indeed the precise version Theorem \ref{thm: a_n} is formulated for these more general sums).  We highlight a particularly interesting case when $a_n = 
 e(n\theta)$ (here $e(n\theta)$ denotes as usual $e^{2\pi i n\theta}$).  For a large class of irrational numbers $\theta$ we show that $\sum_{n\le N} f(n) e(n\theta)$ has a Gaussian distribution.

\begin{theorem}\label{thm: Diophantine}  Let $\theta$ denote an irrational number such that for some positive constant $C=C(\theta)$ and all $q \in {\mathbb N}$ we 
have 
\begin{equation} \label{1.4}
\Vert q \theta \Vert  := \min_{n\in {\mathbb Z}} |q\theta -n | \ge C \exp( - q^{\frac{1}{50}}).
\end{equation} 
If $N$ is sufficiently large (in terms of $\theta$), then as $f$ varies over Steinhaus random multiplicative functions, the quantity 
\[
\frac{1}{\sqrt{N} } \sum_{n \le N} e(n\theta) f(n) 
\]
is distributed like a standard complex normal random variable with mean $0$ and variance $1$.  
\end{theorem}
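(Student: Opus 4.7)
The plan is to apply the precise, weighted version of Theorem~\ref{thm1.1}---namely Theorem~\ref{thm: a_n}---with coefficients $a_n = e(n\theta)$ for $n \le N$. Since $|a_n| = 1$, we have $\sum_{n \le N} |a_n|^2 = N$, so the normalisation $1/\sqrt{N}$ is the natural one. The main hypothesis to verify is the weighted analogue of the multiplicative energy condition~\eqref{1.2}, which in our setting asserts
\[
\sum_{\substack{n_1,n_2,n_3,n_4 \le N \\ n_1 n_2 = n_3 n_4}} e\bigl(\theta(n_1 + n_2 - n_3 - n_4)\bigr) = (2 + o(1)) N^2.
\]
Using the standard bijection $(n_1, n_2, n_3, n_4) = (ac, bd, bc, ad)$ with $a, b, c, d \ge 1$ and $\gcd(a,b) = 1$, the phase becomes $e(\theta(a-b)(c-d))$ and the constraint becomes $\max(a,b) \max(c,d) \le N$. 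Since the trivial solutions ($a = b = 1$ or $c = d$) contribute exactly $2N^2 - N$, the task reduces to showing that the off-diagonal remainder
\[
E^{\mathrm{off}} := \sum_{\substack{a \ne b,\ \gcd(a,b) = 1 \\ c \ne d,\ \max(a,b)\max(c,d) \le N}} e\bigl(\theta(a-b)(c-d)\bigr)
\]
is $o(N^2)$.

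Grouping the terms by $u = (a-b)(c-d) \ne 0$ and using the symmetries $(a \leftrightarrow b)$ and $(c \leftrightarrow d)$, one writes $E^{\mathrm{off}} = 4\,\Re \sum_{u \ge 1} e(\theta u)\, h(u)$, where $h(u) = \sum_{kl = u,\, k,l \ge 1} N(k,l)$ and $N(k,l)$ counts the admissible $(a,b,c,d)$ with $|a-b| = k$ and $|c-d| = l$. A direct lattice-point count gives $N(k,l) \asymp (\phi(k)/k)\, N \log(N/(kl))$ in the bulk region $kl \le N/e$, so $h(u)$ has the size of a weighted divisor function of scale $N \log(N/u)$. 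The problem is thereby reduced to an exponential-sum estimate of the shape
\[
\Bigl|\sum_{u \le U} e(\theta u)\, d(u)\Bigr| = o\bigl(U/\log U\bigr) \qquad \text{for } U \text{ up to } N,
\]
from which Abel summation against the smooth factor $\log(N/u)$ delivers $E^{\mathrm{off}} = o(N^2)$.

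This exponential-sum estimate is the heart of the argument and the place where the Diophantine hypothesis~\eqref{1.4} is essential. The hyperbolic decomposition $\sum_{u \le U} e(\theta u) d(u) = \sum_{kl \le U} e(\theta k l)$, split at $k, l \le \sqrt{U}$, reduces matters to bounding $\sum_{k \le \sqrt{U}} \min(U/k,\, \|\theta k\|^{-1})$. The hypothesis enters here in two complementary ways: the pointwise bound $\|\theta k\|^{-1} \le C^{-1} \exp(k^{1/50})$ is $N^{o(1)}$ for $k \le (\log N)^{25}$ and trivially handles the small-$k$ contribution, while for larger $k$ one uses that any Dirichlet denominator $q$ with $\|q\theta\| \le 1/U$ must satisfy $q \ge (\log U)^{50}$, which drives the cancellation via standard Vinogradov-type sums. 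The main obstacle I anticipate is precisely calibrating these exponential-sum estimates to beat the $\log N$ threshold; the peculiar exponent $1/50$ in~\eqref{1.4} seems chosen to provide a comfortable margin over this threshold rather than to be sharp.
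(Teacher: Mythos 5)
There is a genuine gap, and it occurs at the very first reduction. You propose to verify ``the weighted analogue of condition~\eqref{1.2},'' namely that $\sum_{n_1n_2=n_3n_4} e(\theta(n_1+n_2-n_3-n_4)) = (2+o(1))N^2$. But that is not a hypothesis of Theorem~\ref{thm: a_n}. Conditions (2) and (3) of that theorem require bounding the twisted sums over solutions of $m_1m_2=n_1n_2$ subject to the additional constraints $P(m_1)=P(n_1)$, $P(m_2)=P(n_2)$ (and, for condition (3), all four largest prime factors equal); these constraints come from the martingale filtration by largest prime factor and appear in the conditional-variance term $\sum_p |\widetilde Z_p|^2$. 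For the indicator-function setting of Theorem~\ref{thm1.1} one may drop these constraints because all terms are non-negative, so the unconstrained energy dominates the constrained one. With the oscillating weights $a_n=e(n\theta)$ this positivity argument is unavailable: a small unconstrained sum does not imply a small constrained sum, since restricting to a subset of the solutions can destroy cancellation. So your reduction to $E^{\mathrm{off}}=o(N^2)$, and the subsequent hyperbola-method estimate for $\sum_{u\le U} d(u)e(\theta u)$, establishes a statement that is neither a hypothesis of Theorem~\ref{thm: a_n} nor sufficient for one.

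Concretely, after the parametrization $m_1=ga$, $m_2=hb$, $n_1=gb$, $n_2=ha$, the constraint $P(m_1)=P(n_1)$, $P(m_2)=P(n_2)$ becomes $P(ab)\le\min(P(g),P(h))$, so the inner sums one must estimate are exponential sums such as $\sum_{g\le x/\max(a,b),\, P(g)\ge P(ab)} e(g(a-b)\theta)$, i.e.\ exponential sums over integers with a smoothness (or non-smoothness) restriction. These are not geometric series and cannot be handled by $\min(U/k,\Vert k\theta\Vert^{-1})$ bounds; the paper treats them with the Montgomery--Vaughan theorem on exponential sums with multiplicative coefficients, combined with a spacing lemma (derived from~\eqref{1.4}) showing that the ``bad'' differences $\ell=g-h$ for which some small multiple $v\ell\theta$ is near an integer are $\gg(\log x)^5$ apart, so their total contribution can be bounded trivially. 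This machinery --- which is the actual content of the proof and the reason the paper highlights Montgomery--Vaughan in the introduction --- is entirely absent from your argument. Your Diophantine analysis of $\sum d(u)e(\theta u)$ is plausible in itself, but it addresses the wrong sum; to repair the proof you would need to reinstate the largest-prime-factor constraints and then confront the smooth-number exponential sums directly.
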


The condition \eqref{1.4} imposed on $\theta$ holds for almost all irrational numbers $\theta$, and includes all algebraic irrationals, as 
well as transcendental numbers such as $e$ and $\pi$ that are known not to be Liouville numbers.  We have made no attempt to optimize the exponent $\frac 1{50}$ appearing 
in the criterion \eqref{1.4}, and there is certainly scope for improving it.   On the other hand, from Harper's work it follows that Theorem \ref{thm: Diophantine} 
cannot hold for rational $\theta$, as well as $\theta$ that permit extremely good rational approximations, so that some version of criterion \eqref{1.4} is necessary.  
Finally, we point out recent related work of Benatar, Nishry, and Rodgers \cite{BNR} who consider, for a random Steinhaus multiplicative function $f$, distribution questions 
concerning $\sum_{n\le N} f(n) e(n\theta)$ as $\theta$ varies in ${\mathbb R}/{\mathbb Z}$.

 Briefly, the paper is organized as follows.  Section 2 develops the martingale central limit theorem in a quantitative form, based on the work of McLeish  \cite{McLeish}. 
 Section 3 makes an initial application of these results to the setting of random multiplicative functions.  In particular, we derive there our main technical result Theorem \ref{thm: a_n}, and deduce the simplified Theorem~\ref{thm1.1} stated above.  Section 4 is devoted to the distribution of random multiplicative functions in short intervals, and we establish Corollary \ref{thm: 2log2-1 stein} there.  Corollary \ref{thm: sum of two squares} is treated in Section 5.  In both Sections 4 and 5 a crucial role is played by the flexibility of being able to choose 
 dense subsets ${\mathcal S}$ of ${\mathcal A}$ in Theorem  \ref{thm: a_n}. 
 Corollary \ref{thm: shift primes} admits a short proof based on a simple upper bound sieve, which is 
 presented in Section 6.  Section 7 gives the proof of Corollary \ref{thm:large subset}, and the construction is based on the work of Ford \cite{Ford18} on extremal product sets. 
 Section 8 deals with Theorem~\ref{thm: Diophantine}, and the work of Montgomery and Vaughan \cite{MV77} on exponential sums with multiplicative functions plays a key role here.  
Finally, Section 9 ends with a brief discussion of corresponding results in the setting of Rademacher random multiplicative functions.

\subsection*{Acknowledgments}
We  thank Adam Harper for helpful discussions and comments on an earlier version of the paper. We are also grateful to Louis Gaudet 
for raising a  question during the second author's graduate student seminar at AIM, which led us to Corollary~\ref{thm: sum of two squares}.  Thanks are also due to the referee for a careful reading.  
K.S.  is partially supported through a grant from the National Science Foundation, and a Simons Investigator Grant from the Simons Foundation. 
M.W.X. is partially supported by the Cuthbert C. Hurd Graduate Fellowship in the Mathematical Sciences, Stanford.

\section{McLeish's martingale central limit theorem}\label{sec: McLeish}	

In this section we give a quantitative version of the martingale central limit theorem, which we will apply to the study of random multiplicative functions.  
We follow the treatment in McLeish \cite{McLeish}, but adding some quantification.  The short proof is included for completeness, and in the hope that it may be useful to readers more familiar with analytic number theory than probability.

Let $X_1$, $\ldots$, $X_N$ denote a martingale difference sequence of real valued random variables.  That is, we suppose that 
$$ 
\E[X_1 ]=0, 
$$
and for $1\le n\le N-1$, 
$$ 
\E[ X_{n+1} | X_1, \ldots, X_n] = 0, 
$$ 
where $\E[X|Y]$ denotes the conditional expectation of $X$ given $Y$.
Define 
$$
S_N = X_1 +\ldots + X_N, 
$$ 
and our goal is to show that, under suitable conditions, $S_N$ behaves like a real Gaussian with mean $0$ and variance $1$.  We will achieve this by computing the Fourier transform $\E [ e^{it S_N}]$ and showing that it approximates $e^{-t^2/2}$ (which is the Fourier transform of a standard real Gaussian).   We begin with a simple lemma, which will be key to the result.

\begin{lemma}  \label{lem2.1} Let $y_1$, $\ldots$, $y_N$ be real numbers, and define $K$ (in $[1,N]$) to be the largest integer such that 
$$ 
\sum_{n=1}^{K-1} y_n^2 < 2.
$$ 
Then for any real number $t$ we have 
$$ 
e^{it (y_1 +\ldots + y_N)} = \prod_{n=1}^{K} (1+ ity_n) e^{-\frac {t^2}{2}} + O\Big( e^{t^2} \max_{n=1}^{N} |y_n| 
\Big) + O\Big( e^{t^2} \min \Big( 1, \Big|\sum_{n=1}^{N} y_n^2 -1 \Big|\Big)\Big). 
$$
\end{lemma}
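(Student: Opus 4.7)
The plan is to split based on $\sigma^2 := \sum_{n=1}^N y_n^2$. When $\sigma^2 \ge 2$, the second claimed error equals $O(e^{t^2})$, so the lemma reduces to bounding both sides by $O(e^{t^2}(1+\max_n|y_n|))$. The LHS has modulus $1$; for the RHS, the key inequality $|1+ity|^2 e^{-t^2 y^2} = (1+t^2y^2)e^{-t^2 y^2}\le 1$ (from $1+u\le e^u$) combined with $\sum_{n=1}^K y_n^2 < 2+\max_n y_n^2$ (by the definition of $K$) yields $\big|\prod_{n=1}^K(1+ity_n)e^{-t^2/2}\big|\le e^{\frac{t^2}{2}(1+\max_n y_n^2)}$, which must be sharpened when $\max_n|y_n|\ge 1$ by applying $|(1+ity_m)|e^{-t^2 y_m^2/2}\le \sqrt{2}\,|ty_m|\,e^{-t^2 y_m^2/2}$ (valid for $t^2 y_m^2\ge 1$) on the maximum factor, absorbing the exponential dependence on $\max_n y_n^2$ into a factor of $\max_n|y_n|$.

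The main case is $\sigma^2 < 2$, in which $K=N$. Writing $U_n := (1+ity_n)e^{-t^2 y_n^2/2}$, I would factor the RHS as
\[
\prod_{n=1}^N (1+ity_n)\,e^{-t^2/2} = c\,\prod_{n=1}^N U_n, \qquad c := e^{\frac{t^2}{2}(\sigma^2-1)},
\]
and compare $\prod_n U_n$ to $B := e^{it\sum y_n} = \prod_n e^{ity_n}$ via the telescoping identity
\[
\prod_{n=1}^N U_n - \prod_{n=1}^N e^{ity_n} = \sum_{m=1}^N \Big(\prod_{n<m}U_n\Big)(U_m - e^{ity_m})\Big(\prod_{n>m}e^{ity_n}\Big).
\]
Since $|U_n|\le 1$ and $|e^{ity_n}|=1$, this gives $|\prod_n U_n - B|\le \sum_m |U_m - e^{ity_m}|$. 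A Taylor expansion of $\log(1+ity)$ at the origin shows $|U_m-e^{ity_m}| = O(|ty_m|^3)$ whenever $|ty_m|\le 1/2$, while $|U_m-e^{ity_m}|\le 2$ in general. Splitting the sum at this threshold and using $|\{m:|ty_m|>1/2\}|\le 4t^2\sigma^2$ together with $\sigma^2\le 2$ yields $\sum_m |U_m-e^{ity_m}| = O(t^3\max_n|y_n|)+O(t^2)$; but the $O(t^2)$ term is only nonzero when some $|y_m|>1/(2t)$, which forces $t^2\le 2t^3\max_n|y_n|$, hence $|\prod_n U_n - B| = O(t^3\max_n|y_n|)$.

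To conclude, decompose $c\prod_n U_n - B = c(\prod_n U_n - B)+(c-1)B$. The inequality $|e^x-1|\le|x|\,e^{|x|}$ together with $|c|\le e^{t^2/2}$ gives $|c-1|=O(e^{t^2}\min(1,|\sigma^2-1|))$ (clipping at $1$ when $|\sigma^2-1|>1$). Combining with $|c|\le e^{t^2/2}$, the elementary estimate $t^3 e^{t^2/2}=O(e^{t^2})$ (checked by separating $|t|\le 1$ from $|t|\ge 1$), and $|B|=1$, this yields
\[
\Big|c\prod_n U_n - B\Big| = O(e^{t^2}\max_n|y_n|)+O(e^{t^2}\min(1,|\sigma^2-1|)),
\]
as desired. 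The main technical point I expect is the careful splitting between ``small'' and ``large'' $|ty_m|$ so that the spurious $O(t^2)$ contribution from the large terms is reabsorbed into $O(t^3\max_n|y_n|)$; the $\sigma^2\ge 2$ case is easy but requires the sharpening noted above to handle $\max_n|y_n|\ge 1$ uniformly.
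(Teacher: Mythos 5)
Your proof is correct, and while it rests on the same two elementary inequalities the paper uses ($|1+ix|\le e^{x^2/2}$ and the Taylor expansion $e^{ix}=(1+ix)e^{-x^2/2}e^{O(|x|^3)}$) and the same initial split on $\sigma^2=\sum_n y_n^2\ge 2$ versus $\sigma^2<2$, the main case is handled by a genuinely different mechanism. The paper exponentiates the accumulated error, writing $e^{it\sum y_n}=\prod_n(1+ity_n)\exp(-\tfrac12\sum t^2y_n^2+O(\sum|ty_n|^3))$, and to linearize $\exp(O(\cdot))$ it must first dispose of the sub-cases $\max_n|y_n|\ge e^{-t^2/2}$ and $\sigma^2\ge 3/2$ via the trivial bound $|\prod(1+ity_n)|\le e^{t^2}$. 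Your telescoping comparison $|\prod_n U_n-\prod_n e^{ity_n}|\le\sum_m|U_m-e^{ity_m}|$, with the cubic bound for $|ty_m|\le 1/2$ and the trivial bound $2$ otherwise (reabsorbed into $O(t^3\max_n|y_n|)$ via the counting argument), avoids those preliminary reductions entirely and is arguably more robust, at the cost of being slightly longer; the final decomposition $c\prod U_n-B=c(\prod U_n-B)+(c-1)B$ then isolates the $\min(1,|\sigma^2-1|)$ term exactly as the paper does by comparing $e^{-t^2\sigma^2/2}$ with $e^{-t^2/2}$. One small imprecision in your first case: the excess in the exponent is $y_K^2$ (since $\sum_{n=1}^{K-1}y_n^2<2$), not $\max_n y_n^2$ --- the index realizing $\max_{n\le N}|y_n|$ may exceed $K$ and hence not appear in the product --- so the sharpening $|1+ity_m|\le\sqrt2\,|ty_m|$ should be applied to the factor $m=K$ specifically (and $|y_K|\le\max_n|y_n|$ finishes the bound); this is exactly the paper's device of peeling off the $K$-th factor, and the fix is cosmetic.
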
 
\begin{proof}  Throughout the proof, the following elementary observations will be useful.  For any real 
number $x$ we have 
\begin{equation} 
\label{2.1} 
|1+ ix| = (1+x^2)^{\frac 12} \le e^{x^2/2}, 
\end{equation} 
and, by a Taylor expansion,  
\begin{equation} 
\label{2.2} 
e^{ix} = (1+ix) e^{-x^2/2} \exp(O(|x|^3)). 
\end{equation}

First let us consider the case $\sum_{n=1}^{N} y_n^2 \ge 2$, where the remainder terms in the lemma are 
clearly $\gg e^{t^2} (1 + \max_{n=1}^{N} |y_n|)$.  On the other hand, note that by \eqref{2.1} and the definition of $K$
\begin{align*}
\Big| e^{it(y_1+\ldots +y_N)} - \prod_{n=1}^{K} (1+ ity_n) e^{-\frac {t^2}{2}} \Big| 
&\le 1 + (1+ |ty_K|) e^{-\frac {t^2}{2}}\prod_{n=1}^{K-1} |1+ity_n| 
\\
&\le 1+ (1+ |ty_K|) e^{-\frac {t^2}{2}} \exp\Big(\frac 12 \sum_{n=1}^{K-1} t^2 y_n^2\Big) 
\\
&\le 1 + (1+|ty_K|) e^{t^2/2}. 
\end{align*} 
Since $e^{t^2} \gg 1 + te^{t^2/2}$, the result follows in this case. 

Consider then the complementary case $\sum_{n=1}^{N} y_n^2 <2$, so that $K=N$.   Here \eqref{2.1} gives 
$$
\Big| \prod_{n=1}^{N} (1+i t y_n)\Big|  \le \exp\Big( \sum_{n=1}^{N} \frac{t^2 y_n^2}{2} \Big) \le e^{t^2}, 
$$  
so that the result holds if $\max_{n=1}^{N} |y_n| \ge e^{-\frac{t^2}{2}}$, or if 
$\sum_{n=1}^{N} y_n^2 \ge \frac 32$.  Assume therefore that $\max_{n=1}^{N} |y_n| 
\le e^{-\frac {t^2}{2}}$, and that $\sum_{n=1}^{N} y_n^2 \le \frac 32$.   

Now the Taylor approximation \eqref{2.2} gives 
$$ 
e^{it(y_1+ \ldots + y_N)} = \prod_{n=1}^{N} (1+i ty_n) \exp\Big( -\frac 12 \sum_{n=1}^{N} t^2y_n^2 + O\Big(\sum_{n=1}^{N} |ty_n|^3\Big) \Big).
$$
Since 
$$ 
|t|^3 \sum_{n=1}^{N} |y_n|^3 \le  |t|^3 \Big( \max_n |y_n|\Big) \sum_{n=1}^{N} y_n^2 \le 2 |t|^3 \max_{n} |y_n|, 
$$ 
and this is $\ll 1$ by our assumption, we conclude that 
\begin{align*}
e^{it(y_1+ \ldots + y_N)} &= \prod_{n=1}^{N} (1+i ty_n) \exp\Big( -\frac 12 \sum_{n=1}^{N} t^2y_n^2\Big) \Big( 1+ O\Big( |t|^3 \max_n |y_n|\Big)\Big)\\
&= \prod_{n=1}^{N} (1+i ty_n) \exp\Big( -\frac 12 \sum_{n=1}^{N} t^2y_n^2\Big) + O\Big( |t|^3 \max_n |y_n|\Big). 
\end{align*}

Since 
$$ 
\Big| \prod_{n=1}^{N} (1+ ity_n) \Big| \le \exp\Big( \frac{t^2}{2} \sum_{n=1}^{N} y_n^2 \Big) \le \exp\Big( \frac 34 t^2\Big), 
$$ 
and 
$$ 
\Big| \exp\Big( -\frac{t^2}{2} \sum_{n=1}^{N} y_n^2 \Big) - \exp\Big(-\frac{t^2}{2} \Big) \Big| \le \frac{t^2}{2} \Big| \sum_{n=1}^{N} y_n^2 -1\Big|, 
$$ 
the desired estimate follows. 
\end{proof}

\begin{proposition} \label{prop2.2} Let $X_n$ be a real valued martingale difference sequence, and put $S_N = X_1 +\ldots +X_N$.  Assume that 
$\E [ \max_{n=1}^{N} |X_n|]$ exists.  Then for any $t \in {\mathbb R}$ we have 
$$ 
\E[ e^{it S_N} ] = e^{-t^2/2} + O\Big( e^{t^2} \Big( \E [ \max_{n=1}^{N} |X_n|] + \E \Big[ \min \Big(1, \Big| \sum_{n=1}^{N} X_n^2 -1\Big|\Big) \Big]\Big)\Big). 
$$ 
\end{proposition}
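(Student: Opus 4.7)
The plan is to apply Lemma~\ref{lem2.1} pathwise with $y_n = X_n(\omega)$ and $K=K(\omega)$, take expectations, and identify the main term using a martingale argument. Writing $M_k := \prod_{n=1}^k (1+itX_n)$, with the convention $M_0=1$, Lemma~\ref{lem2.1} gives
\[
e^{itS_N} = M_K\, e^{-t^2/2} + O\!\Big(e^{t^2}\max_{n\le N}|X_n|\Big) + O\!\Big(e^{t^2}\min\Big(1,\Big|\sum_{n=1}^N X_n^2 -1\Big|\Big)\Big).
\]
Taking expectations, the two error terms become precisely the error in the proposition, so everything reduces to showing $\E[M_K]=1$.

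The natural filtration is $\mathcal{F}_n := \sigma(X_1,\ldots,X_n)$, with $\mathcal{F}_0$ trivial. From the definition of $K$, the event $\{K\ge k+1\}$ coincides with $\{\sum_{j=1}^{k} X_j^2 < 2\}\in\mathcal{F}_k$, so $K$ is a bounded stopping time (with $1\le K\le N$). Since $X_{k+1}$ is a martingale difference, the sequence $M_k$ is formally a complex martingale: $\E[M_{k+1}\mid\mathcal{F}_k] = M_k\,(1+it\E[X_{k+1}\mid\mathcal{F}_k])=M_k$. The issue is integrability of $M_k$ in general, so I would instead work with the stopped process $\widetilde{M}_k := M_{k\wedge K}$, whose increments are
\[
\widetilde{M}_{k+1}-\widetilde{M}_k \;=\; \mathbbm{1}_{K\ge k+1}\,(it)\,M_k\, X_{k+1}.
\]

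The key observation is that on $\{K\ge k+1\}$ we have $\sum_{j=1}^{k} X_j^2<2$, and hence by \eqref{2.1} the factor $|M_k|\le \prod_{j=1}^k e^{t^2 X_j^2/2} \le e^{t^2}$ is deterministically bounded. Combined with the hypothesis $\E[\max_n|X_n|]<\infty$, this makes each increment integrable and shows $|\widetilde{M}_k|\le e^{t^2}(1+|t|\max_n|X_n|)$ is uniformly dominated. Taking conditional expectations gives $\E[\widetilde{M}_{k+1}-\widetilde{M}_k\mid\mathcal{F}_k]=0$, so $\widetilde{M}$ is a genuine martingale, and $\E[M_K]=\E[\widetilde{M}_N]=\E[\widetilde{M}_0]=1$.

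The only genuine subtlety is precisely this last step: the martingale property of $M_k$ is not automatic because the unstopped product need not be integrable. The device of truncating at $K$ and exploiting the bound $|M_k|\le e^{t^2}$ on the ``pre-stop'' event is what rescues the argument and, I expect, is the main obstacle one has to navigate carefully. Once $\E[M_K]=1$ is established, substituting into the pathwise expansion from Lemma~\ref{lem2.1} yields the proposition.
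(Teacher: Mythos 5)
Your proof is correct and takes essentially the same route as the paper: the paper's truncated sequence $\widetilde{X_n} = X_n\,{\mathbb I}[\sum_{j=1}^{n-1}X_j^2\le 2]$ yields exactly your stopped product $M_K=\prod_{n\le K}(1+itX_n)$, and the identity $\E[M_K]=1$ is precisely the paper's assertion that $\E[\prod_n(1+it\widetilde{X_n})]=1$ by the martingale property. Your explicit check of integrability via the deterministic bound $|M_k|\le e^{t^2}$ on the pre-stop event is a detail the paper leaves implicit, not a different argument.
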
 
\begin{proof}  Define a new sequence of random variables $\widetilde{X_n}$ by 
$$ 
\widetilde{X_n} = X_n {\mathbb I} \Big[ \sum_{j=1}^{n-1} X_j^2 \le 2 \Big ]. 
$$
From its definition one sees that $\widetilde{X_n}$ is also a martingale difference sequence.  

Further, if $K$ is the largest integer in $[1,N]$ with $\sum_{n=1}^{K-1} X_n^2 \le 2$, then note that ${\widetilde X}_n 
= X_n$ for $n\le K$, and ${\widetilde X_n} =0$ for $n >K$.  From Lemma \ref{lem2.1} it follows that 
$$ 
e^{it S_N} = \prod_{n=1}^{N} (1+ it {\widetilde X}_n) e^{-\frac {t^2}{2} } + O\Big( e^{t^2} 
\Big( \max_n |X_n| + \min\Big( 1, \Big| \sum_{n=1}^{N} X_n^2 -1\Big| \Big) \Big)\Big). 
$$
Take expectations on both sides, and note that the martingale property gives 
$$ 
\E \Big[ \prod_{n=1}^{N} (1+ it \widetilde{X_n})\Big] = 1. 
$$ 
The proposition follows.
\end{proof} 	
	
From Proposition \ref{prop2.2} 	we extract the following quantitative result where the conditions are stronger than necessary but easier 
to check in practice. 

\begin{theorem} \label{thm2.3} Let $X_n$ be a real valued martingale difference sequence, and put $S_N = X_1 +\ldots +X_N$.
Suppose that $\E[ X_n^4]$ exists for each $n$.  Then for any real number $t$ we have 
$$ 
\E[e^{itS_N}] = e^{-t^2/2} + O\Big( e^{t^2} \Big( \sum_{n=1}^{N} \E [ X_n^4] \Big)^{\frac 14} \Big) + O \Big( e^{t^2} \Big( 
\E\Big[ \Big(\sum_{n=1}^N X_n^2 -1\Big)^2 \Big]\Big)^{\frac 12} \Big). 
$$ 
\end{theorem}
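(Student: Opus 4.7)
The plan is to reduce Theorem~\ref{thm2.3} directly to Proposition~\ref{prop2.2} and then control the two expectation terms appearing on the right-hand side of that proposition using only elementary inequalities (Jensen and Cauchy--Schwarz). There is essentially nothing probabilistic left to do beyond invoking Proposition~\ref{prop2.2}; the work is purely in passing from an $L^1$-type error term to $L^4$ and $L^2$ error terms, which is exactly what the hypothesis $\E[X_n^4]<\infty$ was designed for.

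First, I would bound $\E[\max_{n}|X_n|]$. The crude pointwise inequality $\max_{n\le N}|X_n| \le \bigl(\sum_{n=1}^{N} X_n^4\bigr)^{1/4}$ holds deterministically, so by Jensen's inequality applied to the concave function $x \mapsto x^{1/4}$,
$$
\E\bigl[\max_{n\le N} |X_n|\bigr] \le \E\Bigl[\Bigl(\sum_{n=1}^{N} X_n^4\Bigr)^{1/4}\Bigr] \le \Bigl(\sum_{n=1}^{N} \E[X_n^4]\Bigr)^{1/4}.
$$
This converts the first error term in Proposition~\ref{prop2.2} into the first error term in Theorem~\ref{thm2.3}.

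Next, I would bound the truncated expectation $\E\bigl[\min(1, |\sum_n X_n^2 - 1|)\bigr]$. Since $\min(1,y) \le y$ for $y \ge 0$, it suffices to bound the $L^1$ norm $\E\bigl[|\sum_n X_n^2 - 1|\bigr]$, and by Cauchy--Schwarz (or Jensen again)
$$
\E\Bigl[\Bigl|\sum_{n=1}^{N} X_n^2 - 1\Bigr|\Bigr] \le \Bigl(\E\Bigl[\Bigl(\sum_{n=1}^{N} X_n^2 - 1\Bigr)^{2}\Bigr]\Bigr)^{1/2}.
$$
This produces the second error term in the statement.

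Plugging both estimates into Proposition~\ref{prop2.2} finishes the proof. I do not anticipate any genuine obstacle: the real content was already accomplished in Lemma~\ref{lem2.1} and Proposition~\ref{prop2.2}, and Theorem~\ref{thm2.3} simply repackages the conclusion of the latter into a form where the hypotheses are checkable moment conditions rather than statements about $\max_n |X_n|$ and a truncated $L^1$ quantity. The only minor point worth noting is that the bound $\min(1,y)\le y$ loses nothing because the $\min$ is there precisely to make the error term a priori finite, which the $L^2$ hypothesis $\E[(\sum X_n^2 -1)^2]<\infty$ already guarantees.
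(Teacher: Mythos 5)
Your proposal is correct and is essentially identical to the paper's own proof: both reduce to Proposition~\ref{prop2.2}, bound $\E[\max_n|X_n|]$ via the pointwise inequality $\max_n|X_n|\le(\sum_n X_n^4)^{1/4}$ followed by Jensen/H{\"o}lder, and bound the truncated term by dropping the $\min$ and applying Cauchy--Schwarz. No gaps.
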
 
\begin{proof}   H{\" o}lder's inequality gives 
$$ 
\E [ \max_{n=1}^{N} |X_n| ] \le \E \Big[ \Big( \sum_{n=1}^{N} X_n^4 \Big)^\frac 14 \Big] \le \Big( \E \Big[ \sum_{n=1}^{N} X_n^4\Big]\Big)^{\frac 14} 
= \Big( \sum_{n=1}^{N} \E [ X_n^4] \Big)^{\frac 14}. 
$$ 
Further, applying the Cauchy-Schwarz inequality we find  
$$ 
\E\Big[ \min \Big( 1, \Big|\sum_{n=1}^{N} X_n^2 -1\Big|\Big) \Big] \le 
\Big( \E \Big[ \Big(\sum_{n=1}^{N} X_n^2- 1\Big)^2 \Big] \Big)^{\frac 12}. 
$$ 
Thus the stated result follows immediately from Proposition \ref{prop2.2}. 
\end{proof} 

So far we have treated martingale difference sequences of real valued random variables.  Let us now turn to a martingale difference sequence $Z_1$, $\ldots$, $Z_N$ 
of complex valued random variables, where we aim to show that the partial sums $S_N = \sum_{n=1}^N Z_n$ have a standard complex normal distribution.   To achieve this we 
shall compute the Fourier transform $\E[ e^{it_1 \text{Re}(S_N) + it_2 \text{Im}(S_N)}]$ and show that this approximates $e^{-(t_1^2 +t_2^2)/4}$ (which is the Fourier transform of a standard complex Gaussian).  

\begin{theorem} 
\label{thm2.4} Let $Z_1$, $\ldots$, $Z_n$ be a martingale difference sequence of complex valued random variables, and put $S_N = \sum_{n=1}^{N} Z_n$.  
Assume that $\E[ |Z_n|^4]$ exists for each $n$.  Then for any real numbers $t_1$ and $t_2$ we have, with $t^2 = (t_1^2+t_2^2)/2$, 
\begin{align*}
\E[ e^{it_1 \text{Re}(S_N)+it_2 \text{Im}(S_N)}] &= e^{-t^2/2} + O\Big( e^{t^2} \Big( \sum_{n=1}^{N}\E[ |Z_n|^4] \Big)^{\frac 14} \Big) 
+ O\Big( e^{t^2} \Big( \E \Big[ \Big( \sum_{n=1}^N |Z_n|^2 - 1 \Big)^2 \Big]\Big)^{\frac 12}\Big)
\\
& + O\Big( e^{t^2} \max_\phi \Big(\E \Big[ \Big( \sum_{n=1}^{N} (e^{-i\phi} Z_n^2 + e^{i\phi} \overline{Z_n}^2) \Big)^2 \Big] \Big)^{\frac 12}\Big).\\
\end{align*}
\end{theorem}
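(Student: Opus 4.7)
The plan is to reduce the complex case to the real-valued martingale central limit theorem already established in Theorem~\ref{thm2.3}. I would write $Z_n = A_n + iB_n$ with $A_n, B_n$ real; since the complex martingale difference property $\E[Z_{n+1}\mid Z_1,\ldots,Z_n]=0$ is equivalent to $A_n$ and $B_n$ each being real martingale differences with respect to the filtration generated by the $Z_j$, the real linear combination
\[
Y_n := t_1 A_n + t_2 B_n
\]
is again a real martingale difference sequence, and $t_1\text{Re}(S_N)+t_2\text{Im}(S_N)=\sum_n Y_n$. Setting $t:=\sqrt{(t_1^2+t_2^2)/2}$ and $X_n:=Y_n/t$, I would apply Theorem~\ref{thm2.3} to $\{X_n\}$ with parameter $t$: this immediately produces the main term $e^{-t^2/2}$ and the prefactor $e^{t^2}$ in the errors, reducing the problem to translating the two error quantities of Theorem~\ref{thm2.3} into the shape promised in Theorem~\ref{thm2.4}.

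The fourth-moment term is easy: the Cauchy--Schwarz inequality gives $|Y_n|\le\sqrt{t_1^2+t_2^2}\,|Z_n|=t\sqrt 2\,|Z_n|$, so $X_n^4\le 4|Z_n|^4$ and $\sum_n\E[X_n^4]\le 4\sum_n\E[|Z_n|^4]$. The variance term requires a short algebraic identity. From $A_n^2=\tfrac12(|Z_n|^2+\text{Re}(Z_n^2))$, $B_n^2=\tfrac12(|Z_n|^2-\text{Re}(Z_n^2))$, and $A_nB_n=\tfrac12\text{Im}(Z_n^2)$, expansion gives
\[
Y_n^2 \;=\; \frac{t_1^2+t_2^2}{2}|Z_n|^2 \;+\; \text{Re}\!\left(\frac{(t_1-it_2)^2}{2}\,Z_n^2\right).
\]
Because $|t_1-it_2|^2 = t_1^2+t_2^2 = 2t^2$, there is a real phase $\phi_0=\phi_0(t_1,t_2)$ with $(t_1-it_2)^2 = 2t^2 e^{-i\phi_0}$, and therefore
\[
X_n^2 \;=\; \frac{Y_n^2}{t^2} \;=\; |Z_n|^2 \;+\; \frac{1}{2}\bigl(e^{-i\phi_0}Z_n^2 + e^{i\phi_0}\overline{Z_n}^{\,2}\bigr).
\]

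Summing and subtracting $1$, the quantity $\sum_n X_n^2-1$ splits as $(\sum_n|Z_n|^2-1)+\tfrac12\sum_n(e^{-i\phi_0}Z_n^2+e^{i\phi_0}\overline{Z_n}^{\,2})$. Applying $(a+b)^2\le 2(a^2+b^2)$ under the expectation, and noting that the maximum over $\phi$ in the statement of Theorem~\ref{thm2.4} is at least the value at $\phi=\phi_0$, one bounds $\E[(\sum_n X_n^2-1)^2]$ by a constant multiple of the sum of the second and third error quantities of Theorem~\ref{thm2.4}. Substituting back into Theorem~\ref{thm2.3} concludes the proof. I do not expect any serious obstacle: the only substantive point is the algebraic identity displayed above, which records the general fact that the variance structure of a real linear functional of a complex random variable involves both $\E|Z|^2$ and the phase-rotated pseudo-variance $\E Z^2$, and is precisely the reason why Theorem~\ref{thm2.4} must carry a third error term that is absent in the real case.
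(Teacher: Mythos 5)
Your proposal is correct and is essentially the paper's own argument: your $X_n=Y_n/t=(t_1A_n+t_2B_n)/t$ coincides with the paper's $X_n=(e^{-i\theta}Z_n+e^{i\theta}\overline{Z_n})/\sqrt2$ (with $t_1-it_2=\sqrt2\,te^{-i\theta}$ and your $\phi_0=2\theta$), and both proofs then apply Theorem~\ref{thm2.3} and split $\sum_nX_n^2-1$ into the $|Z_n|^2$ part and the phase-rotated $Z_n^2$ part. The only difference is that you work in Cartesian rather than polar form.
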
 
\begin{proof} Write 
$$ 
\frac{t_1 + it_2}{2} = \frac{t e^{i\theta}}{\sqrt{2}} \qquad \text{ with } t = \frac{\sqrt{t_1^2+t_2^2}}{\sqrt{2}},
$$ 
so that 
$$ 
t_1 \text{Re}(S_N) + t_2 \text{Im}(S_N) = t \Big( \frac{e^{-i\theta}S_N + e^{i\theta}\overline{S_N}}{\sqrt{2}}\Big) = t \sum_{n=1}^{N} \frac{e^{-i\theta } Z_n + e^{i\theta} \overline{Z_n}}{\sqrt{2}}. 
$$ 
Now $X_n = (e^{-i\theta } Z_n + e^{i\theta} \overline{Z_n})/\sqrt{2}$ forms a real valued martingale difference sequence, and we may apply Theorem \ref{thm2.3}.  It follows that 
$$ 
\E[ e^{i(t_1 \text{Re}(S_N) + t_2 \text{Im}(S_N))}] = e^{-t^2/2} + O\Big( e^{t^2} \Big( \sum_{n=1}^{N} \E[ X_n^4] \Big)^{\frac 14}\Big) 
+ O\Big( e^{t^2}\Big( \E \Big[ \Big( \sum_{n=1}^{N} X_n^2 - 1 \Big)^2 \Big] \Big)^{\frac 12} \Big).
$$ 
Now note that $\E[X_n^4] \ll \E[ |Z_n|^4]$ and so the first error term above is 
$$ 
O\Big( e^{t^2} \Big(  \sum_{n=1}^{N} \E[ |Z_n|^4] \Big)^{\frac 14}\Big).
$$ 
Regarding the second error term, note that 
\begin{align*}
 \Big( \sum_{n=1}^N X_n^2 -1 \Big)^2  &= \Big( \sum_{n=1}^{N} \Big( \frac{e^{-2i \theta} Z_n^2 + e^{2i\theta} \overline{Z_n}^2}{2} + |Z_n|^2 \Big) -1 \Big)^2 
\\
& \ll \Big( \sum_{n=1}^{N} |Z_n|^2 - 1 \Big)^2 + \Big( \sum_{n=1}^{N} (e^{-2i\theta} Z_n^2 +e^{2i\theta} \overline{Z_n}^2) \Big)^2. 
 \end{align*}
The theorem follows readily. 
\end{proof}

\section{Application to random multiplicative functions} \label{section3}

We now apply the work in Section~\ref{sec: McLeish} to the study of random multiplicative functions.  From now on, we shall denote by $P(n)$ the largest prime factor of 
the integer $n$. 

\begin{theorem} \label{thm: a_n} Let $f$ denote a random Steinhaus multiplicative function, and let $a_n$ denote a sequence of complex numbers.  
Put 
$$ 
V= \sum_{n\le N} |a_n|^2,
$$ 
and define the complex valued random variable 
$$ 
Z:= \frac{1}{\sqrt{V}} \sum_{n\le N} a_n f(n). 
$$ 
Suppose ${\mathcal S}$ is a subset of $[2,N]$ such that for some $1 \ge \epsilon >0$ the following three conditions hold:

(1).  We have 
$$ 
\sum_{\substack{ n\le N \\ n\notin {\mathcal S}}} |a_n|^2 \le \epsilon^2 V. 
$$ 

(2).  We have 
$$ 
\Big| \sum_{\substack{ m_1, m_2, n_1, n_2 \in {\mathcal S} \\ m_1 m_2= n_1 n_2 \\ m_1 \neq n_1, m_2\neq n_2 \\ P(m_1)=P(n_1) \\ P(m_2)=P(n_2)}} a_{m_1} a_{m_2} 
\overline{a_{n_1}a_{n_2}} \Big| \le \epsilon^2 V^2.
$$ 

(3).  We have 
$$ 
\Big| \sum_{\substack{ m_1, m_2, n_1, n_2 \in {\mathcal S} \\ m_1 m_2= n_1 n_2  \\ P(m_1)=P(n_1) =P(m_2)=P(n_2)}} a_{m_1} a_{m_2} 
\overline{a_{n_1}a_{n_2}} \Big| \le \epsilon^4 V^2.
$$ 

Then for any real numbers $t_1$ and $t_2$ we have, with $t^2 =(t_1^2+ t_2^2)/2$ 
$$ 
\E[ e^{it_1 \text{Re}(Z) + it_2 \text{Im}(Z)} ]= e^{-t^2/2} + O(e^{t^2} \epsilon). 
$$ 
\end{theorem}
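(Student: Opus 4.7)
The plan is to apply Theorem~\ref{thm2.4} (the complex martingale CLT) to a martingale obtained by stratifying $\sum a_n f(n)$ by largest prime factor, and then verify that the three hypotheses on $\mathcal S$ correspond precisely to the three error terms. List the primes as $2 = p_1 < p_2 < \cdots$, let $\mathcal F_k := \sigma(f(p_1), \ldots, f(p_k))$, and for $k \ge 1$ set
$$Z_k := \frac{1}{\sqrt V} \sum_{\substack{n \le N \\ P(n) = p_k}} a_n f(n).$$
If $P(n) = p_k$, writing $n = p_k^j m$ with $j \ge 1$ and $P(m) < p_k$ gives $f(n) = f(p_k)^j f(m)$ with $f(m)$ being $\mathcal F_{k-1}$-measurable; since $\E[f(p_k)^j] = 0$ for $j \ge 1$ and $f(p_k)$ is independent of $\mathcal F_{k-1}$, the sequence $(Z_k)_{k \ge 1}$ forms a complex martingale difference sequence, and $Z = a_1/\sqrt V + \sum_{k \ge 1} Z_k$. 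The deterministic piece $a_1/\sqrt V$ has modulus $\le \epsilon$ by condition~(1), shifting the characteristic function by $O(e^{t^2}\epsilon)$. Replacing each $a_n$ by $a_n \mathbbm{1}[n \in \mathcal S]$ (to leverage conditions~(2) and~(3)) similarly costs $O(e^{t^2}\epsilon)$ via the bound $\E[|Z - Z^{\mathcal S}|^2] \le \epsilon^2$.

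It remains to verify the three error terms in Theorem~\ref{thm2.4} for the truncated martingale $(Z_k^{\mathcal S})$. Throughout, the key fact is the Steinhaus orthogonality relation $\E[f(a)\overline{f(b)}] = \mathbbm{1}[a = b]$, which follows from complete multiplicativity and $\E[f(p)^k] = \mathbbm{1}[k = 0]$. Expanding fourth moments,
$$\sum_k \E[|Z_k^{\mathcal S}|^4] = \frac{1}{V^2} \sum_{\substack{m_1, m_2, n_1, n_2 \in \mathcal S \\ m_1 m_2 = n_1 n_2 \\ P(m_1) = P(m_2) = P(n_1) = P(n_2)}} a_{m_1} a_{m_2} \overline{a_{n_1} a_{n_2}} \le \epsilon^4$$
by condition~(3). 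For the variance term, $\sum_k |Z_k^{\mathcal S}|^2$ splits as a deterministic diagonal piece $V^{-1}\sum_{n \in \mathcal S}|a_n|^2 = 1 + O(\epsilon^2)$ plus an off-diagonal (real) piece $U$ whose second moment $\E[U^2]$ unfolds to exactly the sum in condition~(2), yielding $\E[(\sum_k |Z_k^{\mathcal S}|^2 - 1)^2] \ll \epsilon^2$. Finally, $\sum_k (Z_k^{\mathcal S})^2 = V^{-1} \sum_{m, n \in \mathcal S,\, P(m) = P(n)} a_m a_n f(mn)$ only evaluates $f$ at arguments $\ge 4$, so $\E[(\sum_k (Z_k^{\mathcal S})^2)^2] = 0$; on the other hand $\E[|\sum_k (Z_k^{\mathcal S})^2|^2]$ reduces (via $m_1 n_1 = m_2 n_2$ forcing all four $P$-values to coincide) back to condition~(3) and is again $\le \epsilon^4$. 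Feeding these into Theorem~\ref{thm2.4} gives the stated bound.

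The substantive work is combinatorial bookkeeping rather than genuine analysis: one must carefully match the quadruple-sum shape of each moment computation to the shape of the stated hypothesis. The conceptual point that makes everything align is that the $P$-equality constraints in conditions~(2) and~(3) are exactly what the martingale stratification forces; under Steinhaus orthogonality, integers with distinct largest prime factors cannot interact across martingale steps, so it is only the $P$-stratified pieces of the multiplicative energy of $\mathcal S$ (rather than the full $E_\times(\mathcal S)$) that obstruct Gaussian behavior.
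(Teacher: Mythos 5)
Your proposal is correct and follows essentially the same route as the paper: stratify the sum by largest prime factor to obtain a (complex) martingale difference sequence, pass to the subset $\mathcal S$ at cost $O(e^{t^2}\epsilon)$ via condition~(1), and then match the fourth-moment, variance, and pseudo-variance error terms of Theorem~\ref{thm2.4} to conditions~(3), (2), and (3) respectively, exactly as in the paper's argument.
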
 
\begin{proof} Put 
$$ 
\widetilde{Z} = \frac{1}{\sqrt{V}} \sum_{n\in {\mathcal S}} a_n f(n). 
$$ 
Note that, using the Cauchy--Schwarz inequality and assumption (1),  
\begin{align*}
\Big| \E [ e^{it_1 \text{Re}(Z) + it_2 \text{Im}(Z)} ] - \E[ e^{it_1 \text{Re}(\widetilde{Z}) + it_2 \text{Im}({\widetilde Z})} ] \Big| &\ll 
\E[ |t_1 \text{Re} (Z-\widetilde{Z}) + t_2 \text{Im}(Z- \widetilde{Z})|]\\
& \ll \Big( t^2  \E[ |Z-\widetilde{Z}|^2] \Big)^{\frac 12} = \Big( \frac{t^2}{V} \sum_{\substack{ n\le N \\ n\notin {\mathcal S}}} |a_n|^2 \Big)^{\frac 12}= 
 O(\epsilon e^{t^2}). 
\end{align*} 
Thus it is enough to compute $\E[ e^{it_1 \text{Re}(\widetilde{Z}) + it_2 \text{Im}({\widetilde Z})} ]$, and we approach this using our work in Section~\ref{sec: McLeish}. 

For each prime $p\le N$ define 
$$ 
{\widetilde Z}_p = \frac{1}{\sqrt{V}} \sum_{\substack{ n \in {\mathcal S} \\ P(n) = p}} a_n f(n), 
$$ 
so that ${\widetilde Z} = \sum_{p\le N} {\widetilde Z}_p$.  Notice that each term in the sum defining ${\widetilde Z}_p$ involves $f(p)$, which is independent of all $f(\ell)$ 
with $\ell$ being a prime $<p$.  Thus ${\widetilde Z}_p$ forms a martingale difference sequence as $p$ varies over all the primes at most $N$.  Therefore, we may 
apply Theorem \ref{thm2.4} to evaluate $\E[ e^{it_1 \text{Re}(\widetilde{Z}) + it_2 \text{Im}({\widetilde Z})}]$.  The martingale decomposition given above was pioneered by Harper 
\cite{Harper}, motivated by work of Blei and Janson \cite{BJ2004}, and related decompositions have appeared for instance in \cite{LTW2013}.

Now observe that 
\begin{align*}
\sum_{p\le N} \E [ |{\widetilde Z}_p|^4] &= \frac{1}{V^2} \sum_{p\le N}  \sum_{\substack{m_1, m_2, n_1, n_2 \in {\mathcal S} \\ P(m_1)=P(m_2)=P(n_1)=P(n_2)=p}} 
a_{m_1} a_{m_2} \overline{a_{n_1} a_{n_2}} \E [ f(m_1m_2) \overline{f(n_1n_2)}] \\
&= \frac{1}{V^2}  \sum_{\substack{m_1, m_2, n_1, n_2 \in {\mathcal S} \\ P(m_1)=P(m_2)=P(n_1)=P(n_2) \\ m_1 m_2 =n_1n_2 }} a_{m_1} a_{m_2} \overline{a_{n_1} a_{n_2}},
\end{align*}
which is bounded (in magnitude) by $\epsilon^4$ by our assumption (3).  Thus the first error term in applying Theorem \ref{thm2.4} is $O(e^{t^2} \epsilon)$. 

Next observe that 
$$ 
\E \Big[ \Big( \sum_{p\le N} |{\widetilde Z}_p|^2 - 1 \Big)^2 \Big]  = \sum_{p,q \le N} \E[ |{\widetilde Z}_p|^2 |{\widetilde Z}_q|^2] - 2 \sum_{p\le N} \E[|{\widetilde Z}_p|^2] + 1. 
$$
Now 
\begin{align*}
\sum_{p,q \le N} \E[ |{\widetilde Z}_p|^2 |{\widetilde Z}_q|^2]  &= \frac{1}{V^2} 
\sum_{p,  q\le N} \sum_{\substack{m_1, n_1 \in {\mathcal S} \\ P(n_1)=P(m_1)= p}} \sum_{\substack{m_2, n_2 \in {\mathcal S}\\ P(m_2) = P(n_2) = q}} 
a_{m_1} a_{m_2} \overline{a_{n_1} a_{n_2}}  \E [ f(m_1 m_2) \overline{f(n_1 n_2)}] \\
&= \frac{1}{V^2} \sum_{\substack{ m_1, n_1, m_2, n_2 \in {\mathcal S} \\ m_1 m_2 =n_1n_2 \\ P(m_1)= P(n_1) \\ P(m_2) =P(n_2)}} a_{m_1} a_{m_2} \overline{a_{n_1} a_{n_2}} = \frac{1}{V^2} \Big( \sum_{n\in {\mathcal S}} |a_n|^2\Big)^2 + O(\epsilon^2),
\end{align*} 
upon isolating the terms $m_1 = n_1$ and $m_2 =n_2$, and then using assumption (2) to bound the remaining terms.  Further 
$$ 
\sum_{p\le N} \E[|{\widetilde Z}_p|^2] = \frac{1}{V} \sum_{p\le N} \sum_{\substack{m, n \in {\mathcal S} \\ P(m) =P(n)=p}} a_m \overline{a_n} 
\E[ f(m) \overline{f(n)}] = \frac 1V \sum_{n\in {\mathcal S}} |a_n|^2,
$$ 
so that 
$$ 
\E \Big[ \Big( \sum_{p\le N} |{\widetilde Z}_p|^2 - 1 \Big)^2 \Big] = \Big( \frac 1V \sum_{n\in {\mathcal S}} |a_n|^2 -1 \Big)^2 + O(\epsilon^2) = O(\epsilon^2), 
$$ 
upon using assumption (1) in the last step.  Therefore the second error term while using Theorem \ref{thm2.4} may also be bounded by $O(e^{t^2} \epsilon)$.

Finally consider the third error term in Theorem \ref{thm2.4}, which involves the maximum over $\phi$ of 
$$ 
\E \Big[ \Big( \sum_{p\le N} (e^{-i\phi} {\widetilde Z}_p^2 + e^{i\phi} {\overline {\widetilde Z}_p}^2)\Big)^2 \Big] 
= \sum_{p, q \le N} \E \Big[ (e^{-i\phi} {\widetilde Z}_p^2 + e^{i\phi} {\overline {\widetilde Z}_p}^2) (e^{-i\phi} {\widetilde Z}_q^2 + e^{i\phi} {\overline {\widetilde Z}_q}^2)\Big].
$$ 
If $p\neq q$ then 
$$ 
\E[{\widetilde Z}_p^2 {\widetilde Z}_q^2] = \E [ {\widetilde Z}_p^2 \overline{\widetilde {Z}_q}^2 ] 
= \E[ \overline{\widetilde Z}_p^2 \widetilde{Z}_q^2 ] = \E[ \overline{\widetilde Z}_p^2  \overline{\widetilde {Z}_q}^2] =0, 
$$ 
as may be seen by expanding these terms and noting that no diagonal terms arise.  We are left with the terms $p=q$ 
which contribute 
$$ 
\ll \sum_{p\le N}\E \Big[ |{\widetilde Z}_p|^4 \Big] \ll \epsilon^4,
$$ 
from our work on the first error term.   Thus the third error term appearing in Theorem \ref{thm2.4} is $O(e^{t^2} \epsilon^2)$, and the proof of the theorem is complete.
\end{proof} 

Let us now record a simplified version of Theorem \ref{thm: a_n} when $a_n$ is the indicator function of a set. 

\begin{corollary} \label{cor: indicator}  Let ${\mathcal A}$ be a non-empty subset of natural numbers and let $f$ denote a random Steinhaus multiplicative function.  Define the complex valued random variable 
$$ 
Z = \frac{1}{\sqrt{|{\mathcal A}|}} \sum_{n\in {\mathcal A}} f(n). 
$$ 
Let ${\mathcal S}$ be a subset of ${\mathcal A}$, with all elements in ${\mathcal S}$ being at least $2$.  Suppose that $1\ge \epsilon \ge 0$ is such that the following three conditions are met: 

(1).  $|{\mathcal A} \backslash {\mathcal S}| \le \epsilon^2 |{\mathcal A}|$. 

(2). The number of solutions to $m_1 m_2 = n_1 n_2$ with $m_1$, $m_2$, $n_1$ and $n_2 \in {\mathcal S}$, and $m_1 \neq n_1$, $m_1 \neq n_2$ is bounded by 
$\epsilon^4 |{\mathcal A}|^2$.  

(3).  For each prime $p$
$$ 
\# \{ s\in {\mathcal S}: \ \ P(s) = p\} \le \epsilon^4 |{\mathcal A}|. 
$$ 
Then for any real numbers $t_1$ and $t_2$ we have, with $t^2 =(t_1^2+ t_2^2)/2$ 
$$ 
\E[ e^{it_1 \text{Re}(Z) + it_2 \text{Im}(Z)} ]= e^{-t^2/2} + O(e^{t^2} \epsilon). 
$$ 
\end{corollary}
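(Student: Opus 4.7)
The plan is to deduce Corollary~\ref{cor: indicator} as a direct specialization of Theorem~\ref{thm: a_n} to the indicator sequence $a_n = \mathbbm{1}_{\mathcal{A}}(n)$. With this choice $V = \sum_n |a_n|^2 = |\mathcal{A}|$, so the random variable $Z$ coincides with that of the corollary, and it suffices to verify that each of the three hypotheses of Corollary~\ref{cor: indicator} implies the corresponding hypothesis of Theorem~\ref{thm: a_n}, with $\epsilon$ inflated by at most a harmless absolute constant. Hypothesis~(1) of the theorem is immediate from hypothesis~(1) of the corollary, since $\sum_{n \notin \mathcal{S}}|a_n|^2 = |\mathcal{A}\setminus\mathcal{S}| \le \epsilon^2 V$.

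For hypotheses~(2) and~(3) of the theorem, both sums reduce (since $a_n\in\{0,1\}$) to counting ordered quadruples $(m_1,m_2,n_1,n_2) \in \mathcal{S}^4$ satisfying $m_1 m_2 = n_1 n_2$ together with additional prime and distinctness conditions. I would partition every such quadruple into three classes: the \emph{trivial diagonal} $m_1=n_1$, $m_2=n_2$; the \emph{swap diagonal} $m_1=n_2$, $m_2=n_1$ with $m_1 \neq m_2$; and the \emph{genuinely off-diagonal} case $m_1\neq n_1$ and $m_1\neq n_2$. The last class is precisely what corollary hypothesis~(2) bounds by $\epsilon^4 |\mathcal{A}|^2$. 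The trivial and swap diagonals are handled by the observation that in each of them one has $P(m_1)=P(m_2)$; writing $\mathcal{S}_p := \{s\in\mathcal{S}: P(s)=p\}$, their contribution is at most $\sum_p |\mathcal{S}_p|^2 \le (\max_p |\mathcal{S}_p|)\cdot|\mathcal{S}| \le \epsilon^4|\mathcal{A}|^2$ by corollary hypothesis~(3) and the trivial bound $|\mathcal{S}|\le|\mathcal{A}|$.

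Assembling: theorem hypothesis~(2) excludes the trivial diagonal (the conditions $m_1\neq n_1$ and $m_2\neq n_2$ being equivalent under $m_1 m_2 = n_1 n_2$), so only the swap and off-diagonal contributions remain, giving a total $\le 2\epsilon^4 V^2 \le \epsilon^2 V^2$ using $\epsilon\le 1$. Theorem hypothesis~(3) admits all three classes but imposes the stronger prime constraint $P(m_1)=P(n_1)=P(m_2)=P(n_2)$, which can only shrink the count, so its total is $O(\epsilon^4 V^2)$ as required. Theorem~\ref{thm: a_n} then yields the Fourier estimate of Corollary~\ref{cor: indicator} at once.

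The main (and only) subtle point I anticipate is recognizing the swap-diagonal solutions $m_1=n_2$, $m_2=n_1$ as a class distinct from the trivial diagonal, and noting that they automatically force $P(m_1)=P(m_2)=P(n_1)=P(n_2)$; this is precisely what makes corollary hypothesis~(3) the correct tool to absorb them, rather than a genuine multiplicative-energy bound. Beyond this bookkeeping the deduction is entirely mechanical.
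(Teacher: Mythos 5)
Your proposal is correct and follows essentially the same route as the paper: specialize Theorem~\ref{thm: a_n} to the indicator sequence, bound the genuinely off-diagonal quadruples by hypothesis~(2), and absorb the swap-diagonal (and, for the theorem's third condition, trivial-diagonal) quadruples via $\sum_p \#\{s\in\mathcal S: P(s)=p\}^2 \le \epsilon^4|\mathcal A|^2$ using hypothesis~(3). The only quibble is the asserted inequality $2\epsilon^4 \le \epsilon^2$ (false for $\epsilon$ near $1$), but as you note the constant is harmlessly absorbed into the $O(e^{t^2}\epsilon)$ error, exactly as in the paper.
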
 
\begin{proof}  We apply Theorem \ref{thm: a_n} with $a_n$ denoting the indicator function of the set ${\mathcal A}$.  Condition (1) of Theorem \ref{thm: a_n} holds 
by assumption (1) here.  

The sum in condition (2) of Theorem \ref{thm: a_n} counts non-diagonal solutions of $m_1m_2 = n_1n_2$ with $m_i, n_i \in {\mathcal S}$ together with special diagonal solutions 
$m_1=n_2$ and $m_2=n_1$ with $P(m_1)=P(m_2)=P(n_1)=P(n_2)$.  By our assumption (2) the non-diagonal solutions are bounded by $\epsilon^4 |{\mathcal A}|^2$.  
As for the special diagonal solutions, these are bounded by
 $$ 
 \sum_p (\#\{ s\in {\mathcal S}: P(s)=p\})^2 \le \epsilon^4 |{\mathcal A}| \sum_p \# \{ s\in {\mathcal S}: P(s)=p\} \le \epsilon^4 |{\mathcal A}|^2,
 $$ 
 upon using our assumption (3).  Thus condition (2) of Theorem \ref{thm: a_n} holds, with $2\epsilon^4$ in place of $\epsilon^2$ there. 
 
 Finally the sum in condition (3) of Theorem \ref{thm: a_n} is bounded above by the count of non-diagonal solutions to $m_1 m_2 =n_1 n_2$, together with two copies of the special diagonal solutions bounded above.  Thus this sum is bounded by $3 \epsilon^4 |{\mathcal A}|^2$.  
 
We have checked that the conditions in Theorem \ref{thm: a_n} are met (with a slightly larger value of $\epsilon$ there), and the stated result follows. 
\end{proof}

The key condition in Corollary \ref{cor: indicator} is the assumption (2) on non-diagonal solutions.  The third assumption is often harmless, and our next lemma shows that it holds automatically for large subsets of intervals.  

\begin{lemma} \label{lem3.1: large prime}  For all primes $p$, and all $x\ge y\ge 3$ 
$$ 
\# \{ x< n \le x+y: \ \ P(n) = p\} \ll y \exp(-\tfrac 12 \sqrt{\log y \log \log y}). 
$$ 
\end{lemma}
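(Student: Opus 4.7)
The plan is to make a case split on the size of $p$ against a threshold $T = e^{\alpha}$ with $\alpha = \tfrac{1}{2}\sqrt{\log y \log \log y}$; I may assume $y$ is sufficiently large, since for small $y$ the trivial bound $y+1$ already matches the claimed bound up to constants.

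For $p \ge T$, the set in question is contained in the multiples of $p$ in $(x, x+y]$, of which there are at most $y/p + 1 \le y/T + 1 \ll y e^{-\alpha}$, completing that case.  For $p < T$, every $n$ counted is automatically $T$-smooth and satisfies $n > x \ge y$.  Here I would invoke the classical observation that every $T$-smooth integer $n > y$ admits a divisor $d$ in the range $(y, yT]$: list the prime factors of $n$ in any order and take the first partial product that exceeds $y$; since the last prime appended is at most $T$, this partial product is at most $yT$.  Since $d > y$, at most one multiple of $d$ lies in $(x, x+y]$, so the count is bounded by the number of $T$-smooth integers in $(y, yT]$, which is at most $\Psi(yT, T)$.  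It then remains to apply a standard smooth-number estimate (de Bruijn--Hildebrand), giving $\Psi(yT, T) \ll yT \rho(u)$ with $u = \log(yT)/\log T = \log y/\alpha + 1$.

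The hard part will be checking the Dickman-function estimate at this $u$.  The choice of $\alpha$ is engineered so that $u \sim 2\sqrt{\log y/\log \log y}$ and hence $u \log u \sim \sqrt{\log y \log \log y} = 2\alpha$.  Using the asymptotic
\[
\log(1/\rho(u)) = u \log u + u \log \log u - u + O(u/\log u),
\]
the supplementary terms $u(\log \log u - 1)$ are positive and tend to infinity (as $\log \log u \sim \log \log \log y$ eventually exceeds $1$), so one concludes $\rho(u) \le e^{-2\alpha}$ for $y$ large.  This yields $\Psi(yT, T) \ll yT e^{-2\alpha} = y e^{-\alpha}$, matching the target bound and combining with the first case to finish.
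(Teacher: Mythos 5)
Your argument is correct, and in the small-prime case it takes a genuinely different route from the paper. Both proofs open with the same threshold $T=\exp(\tfrac12\sqrt{\log y\log\log y})$ and the same trivial bound $y/p+1$ for $p\ge T$. For $p<T$ the paper bounds the count by $\Psi(x+y,T)-\Psi(x,T)\le\Psi(y,T)$, quoting Hildebrand's inequality on smooth numbers in short intervals, and then uses $\Psi(y,T)=yu^{-(1+o(1))u}$ with $u=2\sqrt{\log y/\log\log y}$, which gives $y\exp(-(1+o(1))\sqrt{\log y\log\log y})$ --- a factor of $2$ to spare in the exponent. You instead use the elementary divisor trick (every $T$-smooth $n>y$ has a $T$-smooth divisor in $(y,yT]$, and each such divisor has at most one multiple in an interval of length $y$), which reduces the count to $\Psi(yT,T)$ with no appeal to short-interval smoothness results; the price is the extra factor $T=e^{\alpha}$, so you must win back a full $e^{-2\alpha}$ from $\rho(u)$ rather than $e^{-\alpha}$. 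Your approach is more self-contained, but it is tight at exactly the point you flagged: with $u=\log y/\alpha+1$ one has $u\log u=2\alpha-\tfrac{u}{2}(1+o(1))\log\log\log y+O(u)$, i.e.\ $u\log u$ alone falls \emph{short} of $2\alpha$ by about $\tfrac{u}{2}\log\log\log y$, and it is the second-order term $u\log\log u\sim u\log\log\log y$ in de Bruijn's asymptotic that overcomes this deficit, leaving a surplus $\sim\tfrac{u}{2}\log\log\log y-u\to\infty$. So the inequality $\rho(u)\le e^{-2\alpha}$ does hold for large $y$, but your phrasing ($u\log u\sim 2\alpha$ plus ``the supplementary terms are positive'') understates how little room there is; in a written version you should carry out this comparison explicitly. (Also note that the uniform range of the de Bruijn--Hildebrand estimate $\Psi(yT,T)\ll yT\rho(u)$ comfortably covers $T=e^{\alpha}$, so that application is legitimate.)
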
 
\begin{proof}  If $p> \exp(\tfrac 12 \sqrt{\log y \log \log y})$ then 
$$ 
\# \{ x< n \le x+y: \ \ P(n) = p\} \le \frac{y}{p} +1 \ll y \exp(-\tfrac 12 \sqrt{\log y \log \log y}).
$$ 
If $p< \exp(\tfrac 12 \sqrt{\log y \log \log y})=: z$ (say) then 
$$ 
\# \{ x< n \le x+y: \ \ P(n) = p\} \le \Psi(x+y,z) - \Psi(x,z) \le \Psi(y,z),
$$
where $\Psi(x,z)$ denotes the number of integers below $x$ all of whose prime factors are below $z$, and the inequality used 
above is due to Hildebrand  \cite{Hildebrand85}.  The bound of the lemma now follows from the familiar estimate 
$$ 
\Psi(y,z) = y u^{-(1+o(1))u} 
$$ 
with $u= \log y/\log z = 2\sqrt{\log y/\log \log y}$. 
\end{proof} 

\begin{proof}[Proof of Theorem~\ref{thm1.1}] Since the convergence of characteristic functions implies the convergence in distribution (see, for example, \cite{Gut}),
the theorem follows immediately from Corollary~\ref{cor: indicator} and Lemma~\ref{lem3.1: large prime}.
\end{proof}

Several times above we have encountered the relation $m_1m_2 = n_1n_2$, and we now record a simple parametrization 
of these solutions, setting up notation that we shall use later. 

\begin{lemma} \label{lem3.2: parameter}  The solutions to $m_1 m_2 = n_1 n_2$ may be parameterized as 
$$ 
m_1 = g a, \ \ \  m_2 = h b, \ \ \ n_1 = g b, \ \ \ n_2 = ha, 
$$ 
where $(a,b)=1$.  Diagonal solutions correspond to solutions with $g=h$ or $a=b$ (in which case $a=b=1$). 
\end{lemma}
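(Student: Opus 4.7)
The plan is to derive the parametrization by taking $g$ to be the greatest common divisor of $m_1$ and $n_1$, and then extracting the remaining parameters from the multiplicative constraint.

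First I would set $g = \gcd(m_1, n_1)$ and write $m_1 = g a$, $n_1 = g b$, where necessarily $(a,b) = 1$. Substituting into $m_1 m_2 = n_1 n_2$ gives $g a \, m_2 = g b \, n_2$, hence $a\, m_2 = b\, n_2$. Since $(a,b) = 1$, the divisibility relation $b \mid a m_2$ forces $b \mid m_2$, and we may write $m_2 = h b$ for some positive integer $h$. Plugging back yields $a h b = b n_2$, so $n_2 = h a$, completing the parametrization. Conversely, it is immediate that any quadruple of this shape satisfies $m_1 m_2 = g a \cdot h b = g b \cdot h a = n_1 n_2$, so the parametrization exhausts all solutions.

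For the characterization of diagonal solutions, recall that a diagonal solution to $m_1 m_2 = n_1 n_2$ is one of the form $(m_1, m_2) = (n_1, n_2)$ or $(m_1, m_2) = (n_2, n_1)$. In the first case, $m_1 = n_1$ means $g a = g b$, so $a = b$, and combined with $(a,b) = 1$ this forces $a = b = 1$. In the second case, $m_1 = n_2$ means $g a = h a$, hence $g = h$ (and similarly $m_2 = n_1$ gives $h b = g b$, which is the same condition). Both cases lie inside the parametrization.

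No step here is a serious obstacle: the only subtle point is verifying that the parametrization really is exhaustive, which is handled cleanly by choosing $g = \gcd(m_1, n_1)$ from the outset so that $(a,b) = 1$ is automatic and the coprimality argument on $a m_2 = b n_2$ goes through without friction. The proof will be a few lines in the final write-up.
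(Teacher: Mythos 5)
Your proposal is correct and is essentially the paper's argument: the paper simply sets $g=(m_1,n_1)$ and $h=(m_2,n_2)$, and your derivation of $h$ via the coprimality argument on $am_2=bn_2$ produces exactly that same $h$. The verification of the diagonal cases is also as intended, so nothing further is needed.
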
 
\begin{proof}  All we have done is to write $g=(m_1,n_1)$ and $h=(m_2, n_2)$. 
\end{proof} 
	
\section{Random multiplicative functions in short intervals:  Proof of Corollary~\ref{thm: 2log2-1 stein}}\label{sec: martingale}

We now use our work in Section \ref{section3} to study partial sums of random Steinhaus multiplicative functions over short intervals ${\mathcal A} =[x, x+y]$.  
Throughout we think of $y$ as large, and $x \ge y$.  Our goal is to show Corollary~\ref{thm: 2log2-1 stein}, which states that in the range $y\le x/(\log x)^{2\log 2-1+\epsilon}$ the limiting distribution of $\sum_{x\le n\le x+y} f(n)$ is Gaussian (improving upon the earlier result in \cite{CS}).  Below we use the standard notation $\Omega(n)$ to denote the 
number of prime factors of $n$ counted with multiplicity.

\begin{proposition}\label{prop:typical count}  Let $y$ be large with $y\le x$.   If $y\le x/(\log x)^2$ take ${\mathcal S}$ to be all of ${\mathcal A}= [x,x+y]$, and in the 
range $y > x/(\log x)^2$ define ${\mathcal S}$ to be subset of elements in ${\mathcal A}$ satisfying $\Omega(n) \le (1+\epsilon) \log \log x$.   
Then, the number of integers in $[x,x+y]$ that are not in ${\mathcal S}$ is $o(y)$.  
Further, in the range $y\le x(\log x)^{1-2\log 2 -\epsilon}$, the number of off-diagonal solutions to 
$$
 m_1m_2= n_1 n_2  \qquad  \text{   with  } m_1, m_2, n_1, n_2 \in {\mathcal S}, 
$$ 
is $o(y^2)$. 
\end{proposition}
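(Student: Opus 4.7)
The proposition makes two claims, which I will address separately.

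\textbf{Size estimate.} For $y \le x/(\log x)^2$ the claim $|\mathcal{A} \setminus \mathcal{S}| = o(y)$ is trivial since $\mathcal{S} = \mathcal{A}$. For $y > x/(\log x)^2$ the exceptional set is $\{n \in [x, x+y] : \Omega(n) > (1+\epsilon)\log\log x\}$. I would control it via the Chernoff-type inequality $\mathbbm{1}_{\Omega(n) > (1+\epsilon)\log\log x} \le (1+\epsilon)^{\Omega(n) - (1+\epsilon)\log\log x}$ combined with a short-interval Shiu estimate $\sum_{x<n\le x+y}(1+\epsilon)^{\Omega(n)} \ll y(\log x)^{\epsilon}$, which is valid comfortably throughout our range of $y$. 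This yields an exceptional count of size $\ll y/(\log x)^{Q(1+\epsilon)}$ with $Q(1+\epsilon) := (1+\epsilon)\log(1+\epsilon) - \epsilon > 0$, hence $o(y)$ as required.

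\textbf{Off-diagonal count.} I would invoke Lemma~\ref{lem3.2: parameter} to parameterize an off-diagonal quadruple as $(g, h, a, b)$ with $(a, b) = 1$, $g \ne h$, $(a, b) \ne (1, 1)$, and $ga, gb, ha, hb \in \mathcal{S}$. Dropping $g \ne h$ and using $a \leftrightarrow b$ symmetry, the off-diagonal count is bounded by $2\sum_{(a,b)=1,\, a > b \ge 1} T(a,b)^2$, where $T(a,b) := \#\{g \ge 1 : ga, gb \in \mathcal{S}\}$. Writing $k = a-b$, the valid $g$ lie in an interval of length $(by - kx)/(ab) \le y/a$, non-empty only for $b \ge kx/y$. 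In the range $y \le x/(\log x)^2$, the trivial bound $T(a, b) \le y/a + O(1)$ summed yields $\sum T(a,b)^2 \ll y^3(\log y)/x = o(y^2)$, recovering the Chatterjee--Soundararajan estimate.

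For the wider range $x/(\log x)^2 < y \le x/(\log x)^{2\log 2 - 1 + \epsilon}$, the $\Omega$-restriction on $\mathcal{S}$ becomes essential: since $\Omega(ga), \Omega(gb) \le K := (1+\epsilon)\log\log x$, we have $\Omega(g) \le K - \max(\Omega(a), \Omega(b))$. I would stratify $\sum T(a, b)^2$ according to the values of $\Omega(a)$ and $\Omega(b)$ and bound the count of valid $g$ in each stratum by a Shiu/Landau-type short-interval estimate for integers with a prescribed number of prime factors. The critical exponent $2\log 2 - 1 = Q(2)$ should emerge as the natural threshold from balancing the density saving $(\log x)^{-Q(\cdot)}$ coming from the $\Omega$-restriction against the $\log y$ loss from the outer sum over $k = a-b$; this is the Hardy--Ramanujan decay exponent for integers with twice the typical number of prime factors, which is natural given that the extremal configuration has $\Omega(g) + \Omega(h) + \Omega(a) + \Omega(b) \asymp 2\log\log x$. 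The main obstacle is executing this optimization cleanly: one must coordinate a Shiu-type estimate for restricted-$\Omega$ integers in short intervals, the coprimality constraint $(a, b) = 1$, and the outer sum over $k$, without introducing spurious logarithmic factors. This delicate bookkeeping is precisely what sharpens the Chatterjee--Soundararajan exponent $1$ down to $2\log 2 - 1$.
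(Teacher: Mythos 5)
Your size estimate and your treatment of the easy range $y\le x/(\log x)^2$ are correct and essentially the paper's argument in different clothing: the Chernoff weight $(1+\epsilon)^{\Omega(n)}$ fed into Shiu is what the authors do (they use $e^{\epsilon'\Omega(n)}$ with $\epsilon'=1/\log\log\log x$, which is less sharp but more than sufficient), and your count $\sum_{(a,b)=1}T(a,b)^2\ll y^3\log y/x=\delta y^2\log y$ is the same computation as the paper's $\sum_{1/\delta\ll g\ll\sqrt x}(\delta g)(y/g)^2\ll\delta y^2\log x$, just with the roles of $(g,h)$ and $(a,b)$ interchanged.

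The problem is the range $x/(\log x)^2<y\le x/(\log x)^{2\log2-1+\epsilon}$, which is the entire content of the proposition (it is what pushes past Chatterjee--Soundararajan). Here you do not give a proof: you describe a stratification by $\Omega(a),\Omega(b)$, assert that the exponent $2\log2-1=Q(2)$ ``should emerge,'' and explicitly defer the ``delicate bookkeeping.'' That bookkeeping is the theorem. Concretely, the paper avoids stratification altogether by a single Rankin-type insertion: since $\Omega(ga)+\Omega(hb)\le 2K$ forces $\Omega(g)+\Omega(h)+\Omega(a)+\Omega(b)\le 2K$ with $K=(1+\epsilon)\log\log x$, one may majorize the indicator of the quadruple by $2^{2K-\Omega(g)-\Omega(h)-\Omega(a)-\Omega(b)}$ (base $2$ being the optimal $\lambda$). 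Shiu's lemma applied to the multiplicative function $2^{-\Omega}$ then gives $\sum_{a\in[x/g,(x+y)/g]}2^{-\Omega(a)}\ll \frac{y}{g}(\log x)^{-1/2}$, so the two inner sums save a full factor of $\log x$; the remaining sum $\sum_{g,h}2^{-\Omega(g)-\Omega(h)}/g^2$ over $g/h\in[(1+\delta)^{-1},1+\delta]$ is handled dyadically (trivially for $1/\delta\ll G\le1/\delta^2$, by two more applications of Shiu for $G>1/\delta^2$) and contributes $\ll\delta\log\log x$ rather than $\delta\log x$. The net bound $2^{2K}\cdot\delta y^2(\log\log x)/\log x=(\log x)^{(2+2\epsilon)\log2-1}\delta y^2\log\log x$ is where $2\log2-1$ actually comes from; note in particular that the saving is \emph{not} against ``the $\log y$ loss from the outer sum over $k=a-b$'' as you suggest, but against the harmonic $\log x$ from the dyadic $g$-sum, and that the weight must couple all four variables at once (your $\Omega(g)\le K-\max(\Omega(a),\Omega(b))$ only uses two of the four constraints and would complicate the joint count of $(a,b)$ pairs in each stratum). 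Your heuristic identification of the threshold with the Hardy--Ramanujan exponent $Q(2)$ is sound, but until the weighted Shiu computation (or an equivalent) is carried out, the proof of the main assertion is missing.
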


\begin{proof}[Deduction of Corollary~\ref{thm: 2log2-1 stein}]
We apply Corollary~\ref{cor: indicator} with $\CA=[x,x+y]$ and $\CS$ as defined above. The first and the second conditions in Corollary~\ref{cor: indicator} follow from Proposition~\ref{prop:typical count}, and the third condition follows from Lemma~\ref{lem3.1: large prime}.  Thus Corollary~\ref{cor: indicator} applies and shows that 
the Fourier transform of $\sum_{x \le  n\le x+y} f(n)$ matches that of a complex Gaussian, giving the desired result. 
\end{proof}

The proof of Proposition~\ref{prop:typical count} relies on the following result of Shiu \cite{Shiu1980}.
	\begin{lemma}[Shiu]\label{Shiu}
Let $f(n)$ be a non-negative multiplicative function such that 

(i) $f(p^{\ell}) \le A_1^{\ell}$ for some positive constant $A_1$, and 

(ii) for any $\epsilon>0$, $f(n)\le A_2 n^{\epsilon}$ for some $A_2=A_2(\epsilon)$. 

Then, for all $\sqrt{x} \le y\le x$ we have 
 \[
\sum_{\substack{x\le n\le  x+y}} f(n) \ll \frac{y}{ \log x} \exp\Big (\sum_{p\le x} \frac{f(p)}{p} \Big). 
 \]
\end{lemma}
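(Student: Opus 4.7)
The plan is to prove Shiu's inequality by a smooth/rough decomposition of $n$, combined with Selberg's upper bound sieve for the smooth part and Rankin's trick for the rough part; this is the standard route for Brun--Titchmarsh-type bounds for multiplicative functions.

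The first step is to choose a sieve parameter $z$ (to be specified below) and, for each $n \in [x,x+y]$, factor $n = ab$ where $a$ is the product of the prime-power factors of $n$ with $p \le z$ (the $z$-smooth part) and $b$ is the product of those with $p > z$. Multiplicativity gives $f(n) = f(a)f(b)$, so the sum becomes
\[
\sum_{\substack{b \le x+y \\ \text{all prime factors of } b > z}} f(b) \sum_{\substack{x/b \le a \le (x+y)/b \\ \text{all prime factors of } a \le z}} f(a).
\]
The inner sum over $a$ I would handle via Selberg's upper bound sieve with weights $f(a)$; using condition (i) to ensure that the local Euler factors at prime powers converge, a weighted sieve yields
\[
\ll \frac{y/b}{\log z} \prod_{p \le z}\Big(1 + \frac{f(p)}{p} + \frac{f(p^2)}{p^2} + \cdots\Big),
\]
provided the sieve error $z^2$ is dominated by $y/b$. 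The choice $z = y^{1/3}$, valid because $y \ge \sqrt{x}$, ensures both $z^2 \le y$ and $\log z \asymp \log x$, the latter being essential for the main term to carry the correct $1/\log x$.

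The second step is to sum over the rough part $b$: Rankin's trick, combined with condition (ii) giving $f(b) \ll b^\epsilon$, controls the contribution of $b$ with many or large prime factors, so that the main contribution is
\[
\sum_{\substack{b \\ \text{all prime factors of } b > z}} \frac{f(b)}{b} \ll \prod_{z < p \le x}\Big(1 + \frac{f(p)}{p} + \frac{f(p^2)}{p^2} + \cdots\Big).
\]
Multiplying the two Euler products reassembles $\prod_{p \le x}\big(1 + f(p)/p + f(p^2)/p^2 + \cdots\big)$. Comparing with $\prod_{p \le x}(1-1/p)^{-1} \asymp \log x$ via Mertens' theorem and absorbing the difference between $1 + f(p)/p$ and $\exp(f(p)/p)$ into a convergent product (justified once more by condition (i)) converts this into the stated bound $\frac{y}{\log x}\exp(\sum_{p \le x} f(p)/p)$.

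The main obstacle is the simultaneous control of the sieve error and the main term in the smooth sum: Selberg's sieve in its weighted form produces an error of order $z^2$ times divisor-function averages, and ensuring this is negligible compared to the main term is precisely why the hypothesis $y \ge \sqrt{x}$ is needed. A secondary technicality is verifying that conditions (i) and (ii) on $f$ are exactly what is required to run a weighted Selberg sieve (bounded sieve dimension, local factors controlled at prime powers) and to execute Rankin's trick for the rough-part tail, without losing on the final Euler-product shape.
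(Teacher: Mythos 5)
This lemma is not proved in the paper at all: it is quoted verbatim from Shiu \cite{Shiu1980}, so your proposal has to be judged as a proof sketch of Shiu's theorem itself, and as such it has a genuine gap at its central step. You apply the upper-bound sieve to the \emph{smooth} factor: after writing $n=ab$ with $a$ the $z$-smooth part and $b$ the $z$-rough part, you claim that Selberg's sieve ``with weights $f(a)$'' gives $\sum_{a\in[x/b,(x+y)/b],\,a\ z\text{-smooth}} f(a)\ll \frac{y/b}{\log z}\prod_{p\le z}(1+f(p)/p+\cdots)$. An upper-bound sieve bounds the number of integers in an interval \emph{free of small prime factors}; it says nothing about weighted sums over $z$-smooth numbers in a short interval, and indeed bounding such sums in intervals of length $y/b$ is a problem of essentially the same depth as the lemma you are trying to prove (the sifting range $(z,x]$ far exceeds the interval length, so no fundamental-lemma or Selberg-type argument applies). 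Worse, your proviso ``provided $z^2\le y/b$'' excludes precisely the generic case: for most $n\le x$ the rough part $b$ has size comparable to $x$ (e.g.\ $f\equiv 1$), so $y/b<1$ and your decomposition never recovers the main contribution.

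Shiu's actual argument runs the decomposition the other way around, and the points you wave at are where the real work lies. One fixes the smooth part $a$ (restricted, say, to $a\le y^{1/2}$) and sieves the $z$-rough cofactor $b\in[x/a,(x+y)/a]$ --- a legitimate sieve situation giving $\ll (y/a)/\log z$ once $y/a\ge z^{2+\epsilon}$; choosing $z$ a small fixed power of $y$ (legitimate since $y\ge\sqrt x$, so $\log z\asymp\log x$) forces $\Omega(b)\le \log x/\log z=O(1)$, whence $f(b)\ll 1$ by hypothesis (i), and summing $f(a)/a$ over $z$-smooth $a$ produces the Euler product $\le\exp\big(\sum_{p\le x}f(p)/p\big)$. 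The remaining cases --- $n$ whose $z$-smooth part exceeds $y^{1/2}$, or $n$ divisible by a large prime power --- are handled by Rankin's trick and hypothesis (ii), and this case analysis is the substance of Shiu's paper; your sketch neither sets up the sieve on the correct factor nor addresses how $f$ is to be controlled on the unsieved cofactor, so the proposal as written does not yield the lemma.
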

	
	\begin{proof}[Proof of Proposition~\ref{prop:typical count}]  We first show that $|{\mathcal A} \backslash {\mathcal S}| = o(y)$.  In the range $y\le x/(\log x)^2$ we have 
	${\mathcal S}= {\mathcal A}$ and so this holds trivially.  Suppose then that $x/(\log x)^2 \le y\le x$, where the claim is simply that most integers in $[x,x+y]$ have 
	the expected number of prime factors, namely $\sim \log \log x$.   We may deduce this quickly from Lemma \ref{Shiu}, taking there $f$ to be 
	the completely multiplicative function $f(n) =\exp(\epsilon^{\prime} \Omega(n))$ with $\epsilon^{\prime} =1/\log \log \log x$.    Then, an application of Lemma \ref{Shiu} gives  
	\begin{align*}
	|{\mathcal A}\backslash{\mathcal S}| &\le \exp(-\epsilon^{\prime} (1+\epsilon)\log \log x) \sum_{x< n\le x+y} f(n) 
	\\
	&\ll \exp(-\epsilon^{\prime} (1+\epsilon)\log \log x) \frac{y}{\log x} 
	\exp\Big( \sum_{p\le x} \frac{f(p)}{p}\Big) \ll y \exp\Big(-\frac{ \epsilon^{\prime} \epsilon}{2}  \log \log x \Big), 
	\end{align*} 
	proving our claim (with lots of room to spare).  

We now focus on the main thrust of the proposition, which is to estimate the number of non-diagonal solutions to $m_1 m_2= n_1n_2$ with all 
variables being in ${\mathcal S}$.  We use the parameterization in Lemma~\ref{lem3.2: parameter}, and write below $\delta =y/x$.  Thus our goal is to bound 
$$ 
\sum_{\substack{a \neq b \\ (a,b)=1 }} \ \ \sum_{\substack{ g \neq h \\ ga, gb, ha, hb \in {\mathcal S}}} 1. 
$$ 
Since $m_1$, $m_2$, $n_1$, and $n_2$ must all lie in the interval $[x,x+y]$, we must have 
\begin{equation} \label{3.1} 
\frac{g}{h} = \frac{m_1}{n_2} \in [1/(1+\delta), (1+\delta)], \qquad \text{  and  } \qquad \frac ab = \frac{m_1}{n_1} \in [1/(1+\delta),(1+\delta)]. 
\end{equation} 
Since we are only interested in off-diagonal solutions (with $a\neq b$ and $g\neq h$), we must have $a$, $b$, $g$, and $h$ all being $\gg 1/\delta$.  In particular, there are no off-diagonal solutions if $y\le c\sqrt{x}$ for a suitable constant $c$, since then $ga$ (for instance) would be $\gg 1/\delta^2 > x$.  Assume henceforth that $y \gg \sqrt{x}$.  Ignoring the condition $(a,b)=1$, our goal now is to bound 
\begin{equation} \label{3.2} 
\sum_{\substack{ g, a, b, h \gg 1/\delta \\  ga, gb, ha, hb \in {\mathcal S}}} 1 \ll \sum_{\substack{ g, h \\ 1/\delta \ll g, h \ll \sqrt{x}}}\ \  \sum_{\substack{a, b \\ ga, gb, ha, hb \in {\mathcal S}} } 1. 
\end{equation} 
In the last estimate above, we used that $g/h$ and $a/b$ are both in $[(1+\delta)^{-1}, (1+\delta)]$, and since $ga \le x$ we must have either $a$ and $b$ being $\ll \sqrt{x}$ 
or $g$ and $h$ being $\ll \sqrt{x}$; by symmetry we restricted attention to the latter case.

If $g$ is given, then by \eqref{3.1} there are $\ll \delta g$ choices for $h$.  If $g$ and $h$ are fixed, then there are $\ll y/g$ choices each for $a$ and $b$.  Therefore, the number of solutions in \eqref{3.2} may be bounded by 
\begin{equation} 
\label{4.3}
\ll \sum_{1/\delta \ll g\ll \sqrt{x}} (\delta g ) (y/g)^2 \ll \delta y^2 (\log x). 
\end{equation} 
This is $o(y^2)$ when $y = o(x/\log x)$, and establishes the proposition in that range.

Now suppose that $x/(\log x)^2 \le y\le x$.  In this range, we exploit that 
${\mathcal S}$ contains only those integers $n\in [x,x+y]$ with $\Omega(n)\le K:= (1+\epsilon)\log \log x$.  
If $ga$, $gb$, $ha$ and $hb$ are all in ${\mathcal S}$ then we must have $2K - \Omega(g)-\Omega(a)-\Omega(b)-\Omega(h) \ge 0$, so that we may bound the quantity in \eqref{3.2} by 
		\begin{equation} \label{3.3} 
		\ll \sum_{\substack{ g, h \\ 1/\delta \ll g, h \ll \sqrt{x}}}\ \  \sum_{\substack{a, b \\ ga, gb, ha, hb \in [x,x+y] } } 2^{2K-\Omega(g)-\Omega(a)-\Omega(b)-\Omega(h)}. 
		\end{equation} 
Here the weight $2^{2K-\Omega(gabh)}$ was chosen with the benefit of hindsight, starting with $\lambda^{2K-\Omega(gabh)}$ for $\lambda\ge 1$ and optimizing the value of $\lambda$.

		Noting that $(1+\delta)^{-1} \le g/h \le (1+\delta)$, and invoking Lemma \ref{Shiu} we may bound the quantity in \eqref{3.3} by 
		\begin{align} \label{3.4}
		&\ll 2^{2K} \sum_{\substack{ 1/\delta \ll g, h \ll \sqrt{x} \\ (1+\delta)^{-1} \le g/h\le (1+\delta)} } 2^{-\Omega(g)-\Omega(h)} 
		\Big( \sum_{a \in [x/g,(x+y)/g]} 2^{-\Omega(a)}\Big)^2 \nonumber\\
		& \ll 2^{2K}  \sum_{\substack{ 1/\delta \ll g, h \ll \sqrt{x} \\ (1+\delta)^{-1} \le g/h\le (1+\delta)} } 2^{-\Omega(g)-\Omega(h)}  \Big( \frac{y}{g\log x} \exp\Big( \sum_{p\le x} \frac{1}{2p}\Big)\Big)^2 \nonumber\\ 
		&\ll \frac{2^{2K}y^2  }{\log x}  \sum_{\substack{ 1/\delta \ll g, h \ll \sqrt{x} \\ (1+\delta)^{-1} \le g/h\le (1+\delta)} } \frac{2^{-\Omega(g)-\Omega(h)} }{g^2}. 
		\end{align} 
		
		It remains to bound the sums over $g$ and $h$ in \eqref{3.4}. To this end, we split the sum over $g$ into dyadic blocks $G\le g\le 2G$ with $1/\delta \ll G \ll \sqrt{x}$.  In the range $1/\delta \ll G \le 1/\delta^2$ note that 
		$$ 
		\sum_{\substack{ G \le g \le 2G \\ (1+\delta)^{-1} \le g/h\le (1+\delta)} } \frac{2^{-\Omega(g)-\Omega(h)} }{g^2} \ll 
		\frac{1}{G^2} \sum_{G\le g\le 2G} \sum_{ (1+\delta)^{-1} g \le h \le (1+\delta)g} 1 \ll \frac{1}{G^2} G (G\delta) \ll \delta. 
		$$ 
		In the range $1/\delta^2 < G \ll \sqrt{x}$, using Lemma \ref{Shiu} twice we obtain the bound 
		$$ 
		\sum_{\substack{ G\le g \le 2G \\ (1+\delta)^{-1} \le g/h\le (1+\delta)} } \frac{2^{-\Omega(g)-\Omega(h)} }{g^2} \ll  
		\frac{1}{G^2} \sum_{G\le g\le 2G} 2^{-\Omega(g)} \frac{\delta G}{(\log G)^{\frac 12}} \ll \frac{\delta}{\log G}.  
		$$ 
		Splitting the interval $1/\delta \ll g \ll \sqrt{x}$ into dyadic blocks, and using  the above two estimates, we conclude that the sums over $g$ and $h$ in \eqref{3.4} contribute $ \ll \delta \log (1/\delta) + \delta \log \log x \ll \delta \log \log x$.  Inserting this in \eqref{3.4}, we conclude that the number of off-diagonal solutions is 
		\begin{equation}\label{eqn: numbsolun}
		\ll \frac{2^{2K} y^2}{\log x} \delta \log \log x 
		\end{equation}
		which is $o(y^{2})$  in the range $y \le x/(\log x)^{2\log 2 -1 +2\epsilon}$, upon recalling that $K= (1+\epsilon) \log \log x$.  
	\end{proof}

	\begin{remark}\label{rem: 4.3}
		Proposition~\ref{prop:typical count} also answers the following question: what is the largest $y$ such that the product set of $\CA=[x,x+y]$ has size $|{\mathcal A} \cdot {\mathcal A}| \gg |{\mathcal A}|^2$?  Since the Cauchy-Schwarz inequality gives $|{\mathcal A} \cdot {\mathcal A}| \ge |{\mathcal S} \cdot {\mathcal S}| \ge 
		 |{\mathcal S}|^4/ E_{\times}({\mathcal S})$, from Proposition~\ref{prop:typical count} we find that the product set of $[x,x+y]$ has its maximal size $\sim y^2/2$ 
		 in the range $y\le x/(\log x)^{2\log 2 -1+\epsilon}$.  On the other hand, if $y$ is larger than $x/(\log x)^{2\log 2 -1 -\epsilon}$ then apart from $o(y^2)$ exceptions, an element in the product set $[x,x+y] \cdot [x,x+y]$ would be in $[x^2, x^2 +2xy+y^2]$ and have about $2\log \log x$ prime factors, and an application of Selberg's work \cite{Selberg} 
shows that there are at most $o(y^2)$ such elements.  Thus the largest $y$ in this problem is of size $x/(\log x)^{2\log 2 -1 +o(1)}$.  
There are other closely related problems where the same threshold arises; for instance see \cite{PS1990} for work on product sets of dense subsets of the first $N$ integers
(and a random version is studied in \cite{mastrostefano2020maximal}), and see \cite{xuzhou} for a study of  product sets of arithmetic progressions. 
	\end{remark}
		
\begin{remark}  The recent paper \cite{PWX} studies high moments of random multiplicative functions over short intervals $[x, x+y]$, and produces a range of $y$ where the high moments match the Gaussian moments (establishing a central limit theorem).   The valid range for $y$ there is weaker than what we establish in Corollary~\ref{thm: 2log2-1 stein}, 
and only when $x/y$ is larger than an arbitrarily large power of $\log x$ does the method of moments yield a central limit theorem.  

The flexibility of restricting to a dense subset ${\mathcal S}$ of $[x,x+y]$ can facilitate the computation of some higher moments.  Indeed the key point in our argument  is 
that, when restricted to integers with a typical number of prime factors,  the fourth moment matches that of a Gaussian so long as $y\le x/(\log x)^{2\log 2 -1+\epsilon}$.   The argument in Remark~\ref{rem: 4.3} shows that the fourth moment blows up if $y \ge x/(\log x)^{2\log 2-1 -\epsilon}$.  Even when restricted to integers with a typical number of prime factors, higher moments will still blow up, so that Corollary~\ref{thm: 2log2-1 stein} is not accessible by the method of moments.  

To illustrate briefly, consider the range of $y$ for which the sixth moment blows up.   Let ${\mathcal S}$ be any dense subset of $[x,x+y]$, and let ${\mathcal S_0}$ denote the elements in ${\mathcal S}$ with $\Omega(n) =(1+o(1)) \log \log x$ so that 
$|{\mathcal S_0}|$ is also $\sim y$.  Then, an application of the Cauchy-Schwarz inequality gives,  
$$ 
\E\Big[ \Big|\sum_{n\in {\mathcal S}} f(n) \Big|^6 \Big] \ge \E\Big[ \Big| \sum_{n\in {\mathcal S_0}} f(n) \Big|^6 \Big]  \ge \frac{|{\mathcal S_0}|^6}{|{\mathcal S}_0 \cdot {\mathcal S_0} 
\cdot {\mathcal S_0}|}. 
$$ 
Now the triple product set ${\mathcal S}_0 \cdot {\mathcal S}_0 \cdot {\mathcal S}_0$ is a subset of the integers in $[x^3, (x+y)^3]$ having $(3+o(1))\log \log x$ prime factors, 
and this set has size $yx^2 (\log x)^{2-3\log 3+o(1)}$.  Therefore 
$$
\E\Big[ \Big|\sum_{n\in {\mathcal S}} f(n) \Big|^6 \Big]  \ge \frac{y^5}{x^2} (\log x)^{3\log 3- 2+o(1)}, 
$$ 
and this is much bigger than $y^3$ if $y \ge x/(\log x)^{\frac 32\log 3 -1 -\epsilon}$.  Note that $\frac 32 \log 3-1 =0.6479 \ldots$, while $2\log 2 -1 = 0.3862 \ldots$. 

\end{remark}

\section{Proof of  Corollary~\ref{thm: sum of two squares}}

Let $\CA$ denote the set of integers in $[x,x+y]$ that are the sum of two squares, where we assume that $x$ and $y$ are large with $x^{\frac 13} \le y = o(x)$.  
The lower bound on $y$ ensures, by work of Hooley \cite{Hooley}, that $|\CA| \asymp y/\sqrt{\log x}$.  As with the proof of Corollary~\ref{thm: 2log2-1 stein}, we shall apply Corollary~\ref{cor: indicator} with a suitable choice of ${\mathcal S} \subset {\mathcal A}$.  Note that the third condition required in Corollary~\ref{cor: indicator} follows from Lemma~\ref{lem3.1: large prime}, and it remains to specify ${\mathcal S}$ and verify the first two conditions there.   As in Section 4, we write $y=\delta x$.  

Consider first the range $x^{\frac 13} \le y \le x/(\log x)^3$, where we shall simply take ${\mathcal S}= {\mathcal A}$.  Thus the first condition in Corollary~\ref{cor: indicator} is immediate, and it remains to bound the number of non-diagonal solutions to $m_1 m_2 = n_1 n_2$ with $m_1$, $m_2$, $n_1$, $n_2$ in ${\mathcal A}$.   The argument leading up to \eqref{4.3} shows that the number of non-diagonal solutions with $m_1$, $m_2$, $n_1$, $n_2$ in $[x,x+y]$ (ignoring that they are sums of two squares) is 
$\ll \delta y^2 \log x$.  Since $|\CA| \gg y/\sqrt{\log x}$, in the range $\delta\le (\log x)^{-3}$ we see that this bound is $\ll |\CA|^2/\log x$, which verifies condition 2. 

Therefore we may assume that $y$ is in the range $x/(\log x)^3 \le y = o(x)$.  In this range we require a more careful choice of the set ${\mathcal S}$.  Let $a(n)$ denote the indicator function of the set of integers that are sums of two squares.  Recall that $a(n)$ is a multiplicative function with $a(p^k)=1$ if $p$ is $2$ or $p\equiv 1 \bmod 4$, and for $p\equiv 3\bmod 4$ given by $a(p^{2k})=1$ and $a(p^{2k+1}) =0$.  A typical integer $n$ of size $x$ that is a sum of two squares will have about $\frac 12 \log \log x$ prime factors, and indeed such an integer will have about $\frac 12 k$ prime factors below $e^{e^{k}}$.  Our set ${\mathcal S}$ will consist of such typical sums of two squares.

More precisely, let $\epsilon >0$ be small, and let ${\mathcal S}$ be the subset of integers $n \in {\mathcal A}$ satisfying 
$\Omega(n; e^{e^k}) \le (\frac 12+\epsilon) k$ for each natural number $k$ in the range $1/\delta \le e^{e^{k}} \le x$.  Here $\Omega(n;t)$ 
counts the number of prime powers $p^a$ dividing $n$ with $p \le t$.  We begin by showing that $|{\mathcal A}\backslash{\mathcal S}|$ is small 
for small $\delta$, which would verify condition 1 of Corollary~\ref{cor: indicator}.

Let $k$ be a given integer in the range $\log \log (1/\delta) \le k \le \log \log x$; note that since $\delta =o(1)$, we know that $k$ is large (tending to infinity with $x$).  We first 
bound the number of integers $n$ in ${\mathcal A}$ that have $\Omega(n;e^{e^{k}}) \ge (\frac 12+\epsilon) k$.  We apply Shiu's result Lemma~\ref{Shiu} taking there 
$f(n) =\exp(k^{-\frac 12}\Omega(n;e^{e^k})) a(n)$ to obtain 
$$ 
\#\{ n\in {\mathcal A}: \ \ \Omega(n;e^{e^{k}}) \ge (\tfrac 12+\epsilon) k\} \le e^{-(\frac 12+\epsilon) \sqrt{k}} \sum_{x \le n \le x+y} f(n) \ll |{\mathcal A}| \exp(-\epsilon \sqrt{k}).
$$
Summing this over all $k$ in the range $\log \log (1/\delta) \le k \le \log \log x$, it follows that $|{\mathcal A}\backslash {\mathcal S}| = o(x)$, as desired.

Having verified condition 1 of Corollary~\ref{cor: indicator}, it remains lastly to check condition 2; namely to check that there are few non-diagonal solutions to 
$m_1m_2 = n_1 n_2$ with $m_1$, $m_2$, $n_1$, $n_2 \in {\mathcal S}$.  We parametrize solutions as in Lemma~\ref{lem3.2: parameter} writing $m_1=ga$, $m_2=hb$, $n_1=gb$, and $n_2 = ha$, where $(a,b)=1$.  We may assume that $g\neq h$ and $a \neq b$ (since we only want non-diagonal solutions).  As in 
\eqref{3.1} we must have $g/h$ and $a/b$ lying in the interval $[(1+\delta)^{-1}, 1+\delta]$, so that we may assume that $g$, $h$, $a$, $b$ are all $\gg 1/\delta$.  Since $a$ and $b$ are coprime, and $ga$ and $gb$ are both sums of 
two squares, it follows that $g$, $a$, and $b$ must all be sums of two squares; similarly $h$ must also be a sum of two squares. 
Thus, we may suppose that $g$, $h$, $a$, and $b$ are all $\gg 1/\delta$, are all sums of two squares, and as in 
\eqref{3.2} our task is to bound 
\begin{equation} 
\label{5.1} 
\sum_{\substack{ g, h \\ 1/\delta \ll g, h\le \sqrt{x}} } \ \ \sum_{\substack{ a, b \\ ga, gb, ha, hb \in {\mathcal S}}} a(g)a(h)a(a)a(b). 
\end{equation} 
Above we omitted the condition $(a,b)=1$; noted that $\max(g,h)$ or $\max(a,b)$ must be $\le \sqrt{x}$, and assumed that the former condition holds by symmetry.

Break the sum over $g$ in \eqref{5.1} into dyadic blocks $G< g\le 2G$ where $1/\delta \ll G \le \sqrt{x}$.  We wish to estimate the contribution to \eqref{5.1} arising from such a dyadic block.  Select $k$ to be the least integer with $e^{e^k} \ge \max(1/\delta, G)$. Since $ga$, $gb$, $ha$, $hb$ are all in ${\mathcal S}$ we must have 
$\Omega(g;e^{e^{k}}) + \Omega(h;e^{e^{k}}) + \Omega(a; e^{e^{k}}) + \Omega(b;e^{e^k}) \le (1+2\epsilon) k$.  Therefore the contribution of this dyadic block is 
$$ 
\ll 2^{(1+2\epsilon)k} \sum_{\substack{ G < g\le 2G \\ h \in (g/(1+\delta), g(1+\delta)) }} a(g) a(h) 2^{-\Omega(g; e^{e^k}) - \Omega(h;e^{e^k})} 
\sum_{\substack{ a,b \in (x/g, (x+y)/g)} } a(a)a(b) 2^{-\Omega(a;e^{e^k}) -\Omega(b;e^{e^k})}. 
$$
Now using Lemma~\ref{Shiu}, the sum over $a$ above may be bounded by 
$$ 
\ll \frac{y}{g\log x} \exp\Big( \sum_{\substack {p\le e^{e^k} \\ p\equiv 1 \bmod 4 }} \frac{1}{2p} + \sum_{\substack{ e^{e^k} < p \le x \\ p\equiv 1 \bmod 4}} \frac 1p \Big) 
\ll \frac{y}{g\sqrt{\log x}} e^{-\frac k4}. 
$$
Naturally the same bound holds for the sum over $b$, and we conclude that the contribution of the dyadic block $G \le g \le 2G$ is 
\begin{equation} 
\label{5.2}  
\ll e^{k( (1+2\epsilon) \log 2 -1/2)} \frac{y^2}{G^2 \log x}  \sum_{\substack{ G < g\le 2G \\ h \in (g/(1+\delta), g(1+\delta)) }} a(g) a(h) 2^{-\Omega(g; e^{e^k}) - \Omega(h;e^{e^k})} . 
\end{equation}

Consider first the case when $1/\delta \ll G \le 1/\delta^2$.   Here we bound $a(g) a(h) 2^{-\Omega(g; e^{e^k}) - \Omega(h;e^{e^k})}$ by $1$, and note that given $g$ there are 
$\ll \delta g $ choices for $h$.  Noting also that $e^k$ is of size $\log G$, we conclude that in this range of $G$, the quantity in \eqref{5.2} may be bounded by 
\begin{equation} 
\label{5.3} 
\ll \delta (\log G)^{\frac 14} \frac{y^2}{\log x}. 
\end{equation} 

 Now consider the range $1/\delta^2 \le G\le \sqrt{x}$.  Here we may use Lemma~\ref{Shiu} twice to bound the quantity in \eqref{5.2} by 
 \begin{align} 
 \label{5.4} 
& \ll (\log G)^{(1+2\epsilon)\log 2-1/2} \frac{y^2}{G^2 \log x} \sum_{G\le g\le 2G} a(g) 2^{-\Omega(g; e^{e^k})}  
\frac{\delta g}{\log G} (\log G)^{\frac 14} \nonumber \\ 
&\ll \frac{y^2}{\log x} \frac{\delta}{(\log G)^{2-(1+2\epsilon)\log 2}}. 
\end{align}

We now return to the problem of bounding \eqref{5.1}.  Using \eqref{5.3}  the contribution of the dyadic blocks with $1/\delta \ll G\le 1/\delta^2$ may 
be bounded by $\ll \delta (\log 1/\delta)^{\frac 54} y^2/\log x$ (since there are $\ll \log (1/\delta)$ such dyadic blocks).  Using \eqref{5.4}, the 
contribution of all the dyadic blocks with $1/\delta^2 \le G\le \sqrt{x}$ is $\ll \delta y^2/\log x$ --- the key fact here is that $2-(1+2\epsilon)\log 2 >1$ 
(for suitably small $\epsilon$) so that when $(\log G)^{-(2-(1+2\epsilon)\log 2)}$ is summed over the powers of $2$ in this range, the resulting sum is 
$\ll 1$.  We conclude that \eqref{5.1}, which bounds the non-diagonal solutions to $m_1m_2 =n_1 n_2$, may be bounded by 
$$ 
\ll \delta (\log 1/\delta)^{\frac 54} \frac{y^2}{\log x} = o(|{\mathcal A}|^2). 
$$ 
This completes our verification of condition 2 in Corollary~\ref{cor: indicator}, and thus our proof of Corollary~\ref{thm: sum of two squares}.

 We remark that the proof goes through for more general sifted sets ${\mathcal A}$.  For instance, suppose ${\mathcal A}$ is 
 the set of integers composed of primes lying in subset of the primes with relative density $\rho$.  Then so long as $\rho \le 1/\log 4 -\epsilon$, 
and $y=o(x)$ is such that $[x,x+y]$ contains the expected number of elements of ${\mathcal A}$ (which is about $y/(\log x)^{1-\rho}$), then one can obtain a 
suitable central limit theorem.

\section{Shifted primes: Proof of Corollary~\ref{thm: shift primes}}
To deduce Corollary~\ref{thm: shift primes} from Theorem~\ref{thm1.1}, we need only show that the number 
of non-diagonal solutions to $(p+k)(q+k) = (r+k)(s+k)$ (with $p$, $q$, $r$ and $s$ being primes below $N$) is 
$o(\pi(N)^2)$.  We use the parametrization of Lemma~\ref{lem3.2: parameter} to write 
$p+k= ga$, $q+k = hb$, $r+k = gb$ and $s+k= ha$, with $a\neq b$ and $g \neq h$ (since we are only interested in bounding 
non-diagonal solutions).    Thus, with $\p$ denoting the indicator function of primes, we must bound 
$$ 
\sum_{ a \ne b } \sum_{ \substack { g\neq h \\ ga, gb, ha, hb \le N+k} } \p (ga-k) \p(gb-k) \p(ha-k) \p (hb-k). 
$$

Note that either $\max(a,b)$ or $\max(g,h)$  must  be $\le \sqrt{N+k}$.  By symmetry, we 
may restrict attention to the former case, and also assume that $a < b$.  Thus the non-diagonal solutions are bounded by
$$ 
\ll \sum_{a < b \le \sqrt{N+k}} \ \ \sum_{g, h\le (N+k)/b} \p (ga-k) \p(gb-k) \p(ha-k) \p(hb-k). 
$$ 
For each small prime $p$ with $p \nmid kab(b-a)$, $g$ must avoid two distinct residue classes $\bmod \ p$ (namely the residue classes $k/a$ and $k/b \bmod p$)  
in order for $ga-k$ and $gb-k$ to be prime.  For the primes $p$ dividing $kab (b-a)$ ignore any constraints that $g$ must satisfy $\bmod \ p$.     Then, a straight-forward upper bound sieve gives 
$$ 
\sum_{g\le (N+k)/b} \p (ga -k) \p(gb-k) \ll \frac{N}{b (\log N)^2} \Big( \frac{|k|ab(b-a)}{\phi(|k|ab(b-a))}\Big)^2  
$$
 and of course the same holds for the sum over $h$.  We conclude that the off-diagonal solutions are bounded by 
 \begin{align*}
& \ll \sum_{a< b \le \sqrt{N+k}} \frac{N^2}{b^2 (\log N)^4} \Big( \frac{|k|ab(b-a)}{\phi(|k|ab(b-a))}\Big)^4\\
& \ll \frac{N^2}{(\log N)^4} \Big(\frac{|k|}{\phi(|k|)}\Big)^4 
 \sum_{a<b \le \sqrt{N+k}} \frac{1}{b^2} \Big( \Big( \frac{a}{\phi(a)}\Big)^{12} + \Big( \frac{b}{\phi(b)}\Big)^{12} + \Big( \frac{b-a}{\phi(b-a)}\Big)^{12} \Big), 
\end{align*} 
upon using the AM-GM inequality.  Since $\sum_{n\le x} (n/\phi(n))^{12} \ll x$, it readily follows that the above is 
$$ 
\ll \frac{N^2}{(\log N)^3} \Big( \frac{|k|}{\phi(|k|)}\Big)^4, 
$$ 
 which suffices since $k$ is a fixed non-zero integer.

\section{Proof of Corollary~\ref{thm:large subset} }

This section records the largest subset ${\mathcal A} \subseteq [1,N]$ that we know for which $\sum_{n\in {\mathcal A}} f(n)$ has a 
Gaussian distribution.  The construction is essentially due to Ford \cite{Ford18}, who showed that there is a subset ${\mathcal B} \subseteq [1,N]$ with 
$|{\mathcal B}| \ge N(\log N)^{-\theta} (\log \log N)^{-\frac 32}$ such that $E_{\times}({\mathcal B}) \ll |{\mathcal B}|^2 (\log \log N)^4$ (see \cite[Lemma 3.1, 3.2]{Ford18}).   

Let  ${\mathcal A}$ range uniformly over all subsets of ${\mathcal B}$ with $\lfloor \rho |{\mathcal B}|\rfloor$ elements, where $\rho =(\log \log N)^{-5}$.  Let us compute the 
average number of non-diagonal solutions to $m_1 m_2 = n_1 n_2$ with $m_1$, $m_2$, $n_1$, $n_2 \in {\mathcal A}$.    Note that any such non-diagonal solution must 
arise from a non-diagonal solution to $m_1 m_2 = n_1 n_2$ with $m_1$, $m_2$, $n_1$, $n_2 \in {\mathcal B}$.  Since at least three of $m_1$, $m_2$, $n_1$, $n_2$ 
must be distinct, such a non-diagonal solution would count as a non-diagonal solution in ${\mathcal A}$ with ``probability" $\ll \rho^3$ (three elements of ${\mathcal A}$ are 
specified, and there are $\binom{|{\mathcal B}|-3}{|{\mathcal A}|-3}$ ways of choosing the remaining elements).  It follows that the average number 
of non-diagonal solutions in ${\mathcal A}$ is $\ll \rho^3 E_\times({\mathcal B}) \ll |{\mathcal A}|^2 (\log \log N)^{-1}$.  

We deduce that there exists a subset ${\mathcal A}$ of $[1,N]$ with $|{\mathcal A}| \ge N(\log N)^{-\theta} (\log \log N)^{-7}$ such that the number of 
non-diagonal solutions to $m_1 m_2 =n_1 n_2$ with $m_1$, $m_2$, $n_1$, $n_2 \in {\mathcal A}$ being at most $|{\mathcal A}|^2 (\log \log N)^{-1}$.  Corollary~\ref{thm:large subset}
now follows from Theorem~\ref{thm1.1}.

\section{Proof of Theorem~\ref{thm: Diophantine}}	

In this section we study the distribution of random multiplicative functions twisted by $e(n\alpha)$.  A key input in understanding such sums is the following result of 
Montgomery and Vaughan.  

 \begin{lemma}[Montgomery-Vaughan \cite{MV77}]\label{MVT}  Let $g$ be a multiplicative function with $|g(n)|\le 1$ for all $n$.   Let $x$ be large, and let $\alpha$ be a real number.  Suppose $\alpha$ has a rational approximation $u/v$ such that $|\alpha-u/v|\le 1/v^2$, where $(u,v)=1$ and $v$ lies in the interval $R \le v \le x/R$ for some 
 parameter $R\ge 2$.  
 Then 
    \[
    \sum_{n\le x} g(n) e(n\alpha) \ll \frac{x}{\log x} +\frac{x}{\sqrt{R}} (\log R)^{3/2}.  
    \]
\end{lemma}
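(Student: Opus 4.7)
The plan is to prove this by the classical Montgomery--Vaughan approach: a combinatorial decomposition of the multiplicative sum into type I and type II pieces, each bounded using the Diophantine approximation of $\alpha$. In practice, this result is quoted from \cite{MV77}, but here is how I would run the argument.

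First, I would use a Vaughan-type identity to write $g(n)$ in terms of convolutions of divisor-bounded weights. Since $g$ is multiplicative, such an identity is available (exploiting the Dirichlet series factorisation of $\sum g(n) n^{-s}$ over primes, or equivalently a truncated logarithmic derivative identity). This rewrites $\sum_{n \le x} g(n) e(n\alpha)$ as a combination of \emph{type I} sums of the shape $\sum_{m \le M} c_m \sum_{k \le x/m} e(mk\alpha)$ (with $c_m$ a divisor-bounded weight and the inner sum smooth) together with \emph{type II} bilinear sums $\sum_{m \sim M,\ k \sim K} a_m b_k \, e(mk\alpha)$ with $MK \asymp x$ and the dyadic ranges chosen so that $M, K \asymp \sqrt{x/R}$.

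Next I would bound the type I pieces via Dirichlet's inequality $\sum_{k \le Y} e(\beta k) \ll \min(Y, \|\beta\|^{-1})$, together with the classical counting lemma $\#\{m \le M : \|m\alpha\| \le \delta\} \ll M/v + v\delta + 1$ which follows from $|\alpha - u/v| \le 1/v^2$ and $(u,v)=1$. Summing these estimates against the divisor-bounded weights and using partial summation gives a contribution of size $O(x/\log x)$, the logarithmic saving coming from the smoothness (prime or prime-power localisation) of the inner weights.

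For the type II bilinear sums I would apply Cauchy--Schwarz in the $m$ variable, which after squaring produces a sum of the form $\sum_{k_1, k_2 \sim K} \min(M, \|(k_1 - k_2)\alpha\|^{-1})$ that is once again controlled by the same counting lemma. Balancing type I against type II by choosing $M \asymp \sqrt{x/R}$ yields precisely the $x(\log R)^{3/2}/\sqrt{R}$ term, and the constraint $R \le v \le x/R$ is exactly what is needed for both bounds to be nontrivial simultaneously. The main obstacle is to arrange the Vaughan decomposition so that multiplicativity of $g$ enters \emph{only} through the combinatorial identity and through the bound $|g| \le 1$ in the bilinear step, and to carefully exploit the equidistribution of $\{m\alpha\}$ at scale $1/v$ so as not to lose more than the stated $(\log R)^{3/2}$ factor in the final estimate.
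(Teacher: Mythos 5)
First, a point of comparison: the paper offers no proof of this lemma at all --- it is imported wholesale from Montgomery and Vaughan \cite{MV77} and used as a black box in Section 8 --- so the relevant benchmark is the original argument of \cite{MV77}, not anything in this paper. Your outline does capture the correct global strategy of that argument (reduce to bilinear sums, apply Cauchy--Schwarz, and control the resulting exponential sums through the rational approximation $u/v$), but as written it has concrete gaps.

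(i) The ``Vaughan-type identity'' you invoke for a general multiplicative $g$ with $|g|\le 1$ does not produce type I sums $\sum_{m} c_m \sum_{k\le x/m} e(mk\alpha)$ with an \emph{unweighted} inner variable: that would require a convolution factor of $g$ that is essentially the constant function $1$, which is not available for arbitrary bounded multiplicative $g$. The identity that actually works is $\sum_{n\le x} g(n)e(n\alpha)\log n = \sum_{dm\le x}\Lambda(d)\,g(dm)\,e(dm\alpha)$, whose main part (after discarding higher prime powers and the terms with $p\mid m$, which contribute $O(x)$) is purely a type II sum over products $pm$. (ii) Consequently the term $x/\log x$ does not arise from a type I estimate; it comes from writing $S(x)\log x=\int_1^x S(t)\,dt/t+\sum_{n\le x}g(n)e(n\alpha)\log n$, bounding the integral trivially by $x$, and dividing by $\log x$ at the end. (iii) The genuinely hard point --- that the second term carries $(\log R)^{3/2}/\sqrt{R}$ with logarithms of $R$ rather than of $x$ --- is precisely what your final sentence defers. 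A direct Cauchy--Schwarz followed by the standard bound $\sum_{h\le H}\min(N,\Vert h\alpha\Vert^{-1})\ll (H/v+1)(N+v\log v)$, summed over the $O(\log x)$ dyadic ranges of the prime variable, loses powers of $\log x$; Montgomery and Vaughan need a refined large-sieve-flavoured estimate for the bilinear sum restricted to primes, together with bookkeeping showing that only $O(\log R)$ dyadic blocks contribute at the critical size. Until that is supplied, your sketch proves a weaker statement than the one quoted.
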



To prove Theorem~\ref{thm: Diophantine} we shall apply Theorem~\ref{thm: a_n}, taking $\CA=\CS$ be the set of all positive integers up to $x$ and $a_n = e(n\theta)$.  The variance $V$ equals $\lfloor x\rfloor$, and we must check the three criteria given in Theorem~\ref{thm: a_n}.    
The first condition holds automatically since ${\mathcal S}={\mathcal A}$.  We now check the third condition, and then consider the second condition (which requires the most work).  
The third condition in Theorem~\ref{thm: a_n} requires a good bound for 
\begin{equation} 
\label{8.1} 
\Big|\sum_{\substack{m_1, m_2, n_1, n_2 \le x \\ m_1 m_2 = n_1 n_2 \\ P(m_1)=P(n_1)=P(m_2)=P(n_2)}} e(\theta(m_1+m_2 -n_1-n_2)) \Big| \le 
\sum_{\substack{m_1, m_2 \le x \\ P(m_1)=P(m_2) } } d(m_1m_2).
\end{equation} 
Since $d(m_1m_2) \le d(m_1)d(m_2) \le \frac 12(d(m_1)^2+ d(m_2)^2)$, we may bound the above quantity by 
$$ 
 \le \sum_{m_1\le x} d(m_1)^2 \sum_{\substack{ m_2 \le x \\ P(m_2) = P(m_1)}} 1.
$$
Given $m_1$, arguing as in Lemma~\ref{lem3.1: large prime} we may bound the inner sum over $m_2$ by $\ll x \exp(-\frac 12 \sqrt{\log x \log \log x}) \ll x/(\log x)^{10}$, 
so that the quantity in \eqref{8.1} may be bounded by 
$$ 
\ll \frac{x}{(\log x)^{10}} \sum_{m_1 \le x} d(m_1)^2 \ll \frac{x^2}{(\log x)^{7}},
$$
 which is more than we need.

It remains to verify the second condition, which requires us to bound 
\begin{equation*}\label{eqn: dio}
    \sum_{\substack{m_1, m_2, n_1, n_2 \le x\\m_1m_2 = n_1n_2\\
	P(m_1) =P(n_1) \\
	P(m_2) = P(n_2)\\
	m_1 \neq n_1, m_2 \neq n_2}} e((m_1+m_2- n_1 -n_2 )\theta).
\end{equation*}
We use the parametrization in Lemma~\ref{lem3.2: parameter}, to write $m_1=ga$, $m_2 =hb$, $n_1= gb$ and $n_2=ha$.  The 
constraints on $m_1$, $m_2$, $n_1$, $n_2$ then become $(a,b)=1$ with $a\neq b$,  $P(ab) \le \min ( P(g), P(h))$, and $\max(a,b) \times \max(g,h)\le x$.  
Thus the sum we wish to bound becomes 
\begin{equation} 
\label{8.2} 
\sum_{\substack{ \max (a,b ) \times \max(g, h) \le x \\ a\neq b, \ (a,b)=1 \\ P(ab)\le \min (P(g), P(h))}} e((g-h)(a-b) \theta). 
\end{equation} 
Since $\max(a,b) \times \max(g,h) \le x$, we may break the sum above into the cases (1) when $\max(g,h) \le \sqrt{x}$, (2) when $\max(a,b )\le \sqrt{x}$, taking 
care to subtract the terms satisfying (3) $\max(a,b)$ and $\max(g,h)$ both below $\sqrt{x}$.  

Before turning to these cases, we record a preliminary lemma which will be useful in our analysis. 

\begin{lemma} \label{lem8.2} Let $\theta$ be an irrational number satisfying the Diophantine condition \eqref{1.4}. 
Let ${\mathcal L} = {\mathcal L}(x)$ denote the set of all integers $\ell$ with $|\ell| \le \sqrt{x}$ such that for some 
$v \le (\log x)^5$ one has $\Vert v\ell \theta \Vert \le x^{-\frac 13}$.  Then $0$ is in ${\mathcal L}$, and for any two distinct elements $\ell_1$, $\ell_2 \in {\mathcal L}$ 
we have $|\ell_1 - \ell_2| \gg (\log x)^{5}$.
\end{lemma}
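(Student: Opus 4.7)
The plan is to verify the two claims in turn. That $0 \in \mathcal{L}$ is immediate: taking $v=1$ gives $\Vert v\cdot 0 \cdot \theta\Vert = 0 \le x^{-1/3}$, and $|0|\le \sqrt{x}$.

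For the separation bound, suppose $\ell_1, \ell_2 \in \mathcal{L}$ are distinct, with corresponding multipliers $v_1, v_2 \le (\log x)^5$ satisfying $\Vert v_j \ell_j \theta\Vert \le x^{-1/3}$. The idea is to combine these into a single good rational approximation and then invoke \eqref{1.4}. Since $\Vert \cdot \Vert$ is subadditive and $\Vert kw \Vert \le |k| \Vert w\Vert$ for integer $k$, I get
\[
\Vert v_1 v_2 (\ell_1-\ell_2)\theta \Vert \le v_2 \Vert v_1\ell_1\theta\Vert + v_1 \Vert v_2\ell_2\theta\Vert \le 2(\log x)^5 x^{-1/3}.
\]
Set $q = v_1 v_2 |\ell_1-\ell_2|$, which is a positive integer since $\ell_1\neq \ell_2$.

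Now apply the Diophantine condition \eqref{1.4}: $\Vert q\theta\Vert \ge C\exp(-q^{1/50})$. Combined with the upper bound just established,
\[
C \exp(-q^{1/50}) \le 2(\log x)^5 x^{-1/3},
\]
so (for $x$ sufficiently large in terms of $\theta$)
\[
q^{1/50} \ge \log\!\Big(\frac{C x^{1/3}}{2(\log x)^5}\Big) \gg \log x,
\]
giving $q \gg (\log x)^{50}$. Since $v_1 v_2 \le (\log x)^{10}$, this forces
\[
|\ell_1 - \ell_2| \ge \frac{q}{v_1 v_2} \gg (\log x)^{40} \gg (\log x)^5,
\]
which is the required separation (with a great deal of room to spare).

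There is no real obstacle here; the only care needed is to make sure the exponent $\tfrac{1}{50}$ in \eqref{1.4} is comfortably small compared to the parameters ($x^{-1/3}$ accuracy, $(\log x)^5$ multipliers, $\sqrt{x}$ range for $\ell$) so that the transcendence-type lower bound on $\Vert q\theta\Vert$ still beats the upper bound derived from the two approximations. The chosen parameters leave plenty of slack, confirming that any two distinct elements of $\mathcal{L}$ are separated by at least $(\log x)^5$.
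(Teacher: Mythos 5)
Your proof is correct and follows essentially the same route as the paper: combine the two approximations via $\Vert v_1v_2(\ell_1-\ell_2)\theta\Vert \le 2(\log x)^5 x^{-1/3}$ and then play this against the Diophantine lower bound \eqref{1.4} applied to $q=v_1v_2|\ell_1-\ell_2|$. You simply spell out the subadditivity step and the final quantitative comparison that the paper leaves implicit.
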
 
\begin{proof}  Evidently $0$ is in ${\mathcal L}$, and the main point is the spacing condition satisfied by elements of ${\mathcal L}$.  If $\ell_1$ and $\ell_2$ are distinct elements of ${\mathcal L}$ then there exist $v_1$, $v_2 \le (\log x)^5$ with $\Vert v_1 \ell_1 \theta\Vert \le x^{-\frac 13}$ and $\Vert v_2 \ell_2 \theta \Vert \le x^{-\frac 13}$.  It follows that 
$\Vert v_1 v_2 (\ell_1 -\ell_2) \theta\Vert \le 2(\log x)^5 x^{-\frac 13}$.  The desired bound on $|\ell_1 -\ell_2|$ now follows from the Diophantine property that we required of $\theta$, namely that $\Vert q\theta \Vert \gg \exp(-q^{\frac 1{50}})$.  
\end{proof}

\subsection{Case 1: $\max (g, h )\le \sqrt{x}$} Suppose that $g$ and $h$ are given with $g$ and $h$ below $\sqrt{x}$, and consider the sum over $a$ and $b$ in \eqref{8.2}.  
We distinguish two sub-cases, depending on whether $g-h$ lies in ${\mathcal L}$ or not.  Consider first the situation when $g-h \not\in {\mathcal L}$.  Using M{\" o}bius inversion to detect the condition that $(a,b)=1$, the sums over $a$ and $b$ may be expressed as (the $O(1)$ error term accounts for the term $a=b=1$ which must be omitted)
\begin{align}\label{8.3}  
&\sum_{\substack{ k\le x/\max(g, h)\\ P(k) \le \min(P(g),P(h))}} \mu(k) \sum_{\substack{ r, s \le x/(k\max (g,h)) \\ P(r), P(s) \le \min(P(g),P(h)) }} e(k(g-h)(r-s)\theta) +O(1) 
\nonumber \\
&= \sum_{\substack{ k\le x/\max(g, h)\\ P(k) \le \min(P(g),P(h))}} \mu(k) \Big|\sum_{\substack{ r \le x/(k\max (g,h)) \\ P(r) \le \min(P(g),P(h)) }} e(k(g-h)r\theta)\Big|^2 +O(1). 
\end{align}
If $k> (\log x)^2$ then we bound the sum over $r$ above by $x/(k\max(g,h))$, and so these terms contribute to \eqref{8.3} an amount 
$$
\ll \sum_{k>(\log x)^2} \frac{x^2}{k^2 \max(g,h)^2} \ll \frac{x^2}{(\log x)^2 \max (g, h)^2}.
$$
Now consider $k\le (\log x)^2$, and find (using Dirichlet's theorem) a rational approximation $u/v$ to $k(g-h) \theta$ with $|k(g-h) \theta -u/v| \le 1/(vx^{\frac 13})$ and $v\le x^{\frac 13}$.  
Since $g-h \not \in {\mathcal L}$ by assumption, it follows that $v \ge (\log x)^3$, and therefore an application of Lemma~\ref{MVT} shows that the sum over $r$ in \eqref{8.3} 
is $\ll x/(k\max(g,h)\log x)$.  Thus the terms $k \le(\log x)^2$ contribute to \eqref{8.3} an amount bounded by 
$$ 
\sum_{k\le (\log x)^2} \frac{x^2}{k^2 \max(g,h)^2 (\log x)^2} \ll \frac{x^2}{(\log x)^2 \max(g,h)^2}. 
$$ 
Summing this over all $g$, $h \le \sqrt{x}$, we conclude that the contribution of terms with $\max(g,h)\le \sqrt{x}$ and  $g-h \not\in {\mathcal L}$ to \eqref{8.2} is 
$$
\ll \sum_{g, h \le \sqrt{x}} \frac{x^2}{(\log x)^2 \max(g,h)^2} \ll \frac{x^2}{\log x}. 
$$ 

Now consider the contribution of the terms $\max(g,h)\le \sqrt{x}$ where $g-h$ lies in ${\mathcal L}$.   Note that in \eqref{8.2} we allow for the possibility that $g=h$; we begin by 
estimating these terms (which could also be handled as in our argument for the terms in \eqref{8.1}).  The terms $g=h \le \sqrt{x}$ give 
$$ 
\le \sum_{g\le \sqrt{x}} \Big(\sum_{\substack{a\le x/g \\ P(a) \le P(g) }} 1 \Big)^2  \le \sum_{(\log x)^2 \le g \le \sqrt{x}} \frac{x^2}{g^2} + \sum_{g\le (\log x)^2} \Psi(x/g,(\log x)^2) 
\ll \frac{x^2}{\log x}. 
$$
Now consider the terms with $g-h \in {\mathcal L}$ with $g-h \neq 0$.  Bounding the sum over $a$ and $b$ trivially by $\le (x/\max(g,h))^2$, we see that the 
contribution of these terms is 
$$ 
\ll \sum_{\substack{g \neq h \le \sqrt{x} \\ g- h \in {\mathcal L}}} \frac{x^2}{\max(g,h)^2} \ll \sum_{g\le \sqrt{x}} \frac{x^2}{g^2} \sum_{\substack{ h< g\\ g-h \in {\mathcal L}}} 1 
\ll  \sum_{g\le \sqrt{x}} \frac{x^2}{g^2} \frac{g}{(\log x)^5} \ll \frac{x^2}{(\log x)^4}, 
$$ 
where we used Lemma \ref{lem8.2} to bound the sum over $h$.

We conclude that the contribution of terms with $\max(g,h)\le \sqrt{x}$ to \eqref{8.2} is $\ll x^2/\log x$, completing our discussion of this case.  
 
 \subsection{Case 2: $\max\{a, b\}\le \sqrt{x}$}  Here we must bound 
 $$ 
 \sum_{\substack{ a\neq b \le \sqrt{x} \\ (a, b)=1}} \Big| \sum_{\substack{g\le x/\max(a,b) \\ P(ab) \le P(g)} } e(g(a-b)\theta)\Big|^2.
 $$
 Again we distinguish the cases when $a-b\in{\mathcal L}$, and when $a-b \not \in {\mathcal L}$.  In the first case, we bound the sum over $g$ above trivially by $\le x/\max(a,b)$, 
 and thus these terms contribute (using Lemma \ref{lem8.2})
 $$ 
 \ll \sum_{a\le \sqrt{x}} \frac{x^2}{a^2} \sum_{\substack{ b <a \\ a-b\in {\mathcal L}}} 1 \ll \sum_{a\le \sqrt{x}} \frac{x^2}{a^2} \frac{a}{(\log x)^5} \ll \frac{x^2}{(\log x)^4}. 
 $$ 
 
 Now consider the case when $a-b\not\in {\mathcal L}$.  Using Dirichlet's theorem we may find a rational approximation $u/v$ to $(a-b)\theta$  such that 
 $|(a-b)\theta-u/v| \le 1/(vx^{\frac 13})$ and $v\le x^{\frac 13}$.  Since $(a-b)\not \in {\mathcal L}$, it follows that $v \ge (\log x)^5$.  Therefore, two applications of Lemma \ref{MVT}  give
 $$ 
 \sum_{\substack{g\le x/\max(a,b) \\ P(g) \ge P(ab)}} e(g(a-b)\theta) = \sum_{\substack{g\le x/\max(a,b)}} e(g(a-b)\theta) - \sum_{\substack{g\le x/\max(a,b) \\ P(g) <P(ab)} } 
 e(g(a-b)\theta) \ll \frac{x}{\max(a,b) \log x}.
 $$ 
 Thus the contribution of the terms with $a-b\not \in {\mathcal L}$ is 
 $$ 
 \ll \sum_{a, b\le \sqrt{x}} \frac{x^2}{(\log x)^2 \max(a,b)^2} \ll \frac{x^2}{\log x}. 
 $$ 
 
 Thus the contribution to \eqref{8.2} from the Case 2 terms is $\ll x^2/\log x$.  
 
 \subsection{Case 3: $\max(a,b)$ and $\max(g,h) \le \sqrt{x}$}  Here we must bound 
 $$ 
 \sum_{\substack{ a\neq b \le \sqrt{x} \\ (a, b)=1}} \Big| \sum_{\substack{g\le \sqrt{x} \\ P(ab) \le P(g)} } e(g(a-b)\theta)\Big|^2,
 $$ 
 and our argument in Case 2 above furnishes the bound $\ll x^2/\log x$.

 Combining our work in the three cases, we conclude that the quantity in \eqref{8.2} is $\ll x^2/\log x$.  This verifies the second condition needed to 
 apply Theorem~\ref{thm: a_n} and completes our proof of Theorem~\ref{thm: Diophantine}.

\section{Rademacher random multiplicative functions}\label{sec: rad}

In this section we briefly indicate the analogues of our results in the Rademacher model of random multiplicative functions, where $f(p) = \pm 1$ with  equal 
probability (and chosen independently for different primes), and $f(n)$ is taken to be $0$ if $n$ has a square factor.  In our work above, a key role was played by the fourth moment, 
which in the Steinhaus case led to solutions to $m_1 m_2 =n_1n_2$ and to the notion of the multiplicative energy.  If we consider the corresponding fourth moment in the Rademacher case, we are led to (with ${\mathcal A}$ denoting a set of square-free integers)
$$ 
\E \Big[ \Big( \sum_{n\in {\mathcal A}} f(n) \Big)^4 \Big]  = \sum_{\substack{ n_1, n_2, n_3, n_4 \in {\mathcal A} \\ n_1 n_2 n_3 n_4 =\square }}1. 
$$ 
 Thus the analogue of the multiplicative energy here is what may be termed the {\sl square energy of ${\mathcal A}$} namely: 
\[
E_{\square}(\mathcal{A}): = \#\{(n_1, n_2, n_3, n_4)\in \mathcal{A}^{4}:  n_1 n_2n_3n_4 = \square  \}.
 \]
 Note that there are $\sim 3 |{\mathcal A}|^2$ diagonal solutions, given by the pairings $n_1=n_2$ and $n_3=n_4$; $n_1= n_3$ and $n_2=n_4$; or $n_1= n_4$ and $n_2=n_3$.  
Taking this difference into account, and arguing as in our proof of Theorem~\ref{thm: a_n} and the simplified Theorem~\ref{thm1.1} we can establish the following result (whose proof we omit).  

\begin{theorem}\label{cor: general rad} 
Let $\mathcal{A}\subset [1, N]$ be a set of square-free integers with 
$$
|\CA|\ge N \exp(-\tfrac 13\sqrt{\log N \log \log N}).
$$
Suppose that there exists a subset $\mathcal{S}\subset \mathcal{A}$ with $|\mathcal{S}|=(1+o(1))|\mathcal{A}|$ and satisfying
\[
E_{\square}(\mathcal{S})   = (3+o(1))|\mathcal{S}|^{2}.
\]
Then as $f$ ranges over Rademacher random multiplicative functions, the quantity
		\[
		\frac{1}{\sqrt{|\mathcal{A}|} } \sum_{n \in \mathcal{A}} f(n)
		 \]
is distributed like a standard (real) normal random variable with mean $0$ and variance $1$. 
\end{theorem}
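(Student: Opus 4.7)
The strategy is to mirror the proof of Theorem~\ref{thm1.1} via Theorem~\ref{thm: a_n} from Section~\ref{section3}, but since Rademacher functions are real-valued we invoke the real-valued martingale CLT (Theorem~\ref{thm2.3}) rather than the complex version (Theorem~\ref{thm2.4}). First, using the orthogonality $\E[f(m)f(n)] = \mathbbm{1}[m=n]$ on square-free integers, one has $\E[|Z-\widetilde Z|^2] = |\CA \setminus \CS|/|\CA| = o(1)$, so Cauchy--Schwarz shows that the characteristic functions of $Z := \frac{1}{\sqrt{|\CA|}}\sum_{n \in \CA} f(n)$ and $\widetilde Z := \frac{1}{\sqrt{|\CS|}}\sum_{n \in \CS} f(n)$ differ by $o(e^{t^2})$ pointwise. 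It therefore suffices to show $\widetilde Z$ is asymptotically a standard real normal.

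For the martingale decomposition, define for each prime $p \le N$
$$
Z_p = \frac{1}{\sqrt{|\CS|}} \sum_{\substack{n \in \CS \\ P(n) = p}} f(n),
$$
so that $\widetilde Z = \sum_p Z_p$. Every term in $Z_p$ contains a factor $f(p)$, while the remaining $f$-factors involve only primes $\ell < p$; since $\E[f(p)]=0$ and $f(p)$ is independent of the smaller primes, $(Z_p)_{p \le N}$ is a real-valued martingale difference sequence. I would then apply Theorem~\ref{thm2.3}, working with the basic identity that, for square-free $n_1, \ldots, n_k$,
$$
\E[f(n_1) \cdots f(n_k)] = \mathbbm{1}[n_1 n_2 \cdots n_k = \square].
$$

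It remains to verify the two error-term conditions. For $\sum_p \E[Z_p^4]$: this equals $|\CS|^{-2}$ times the number of $(n_1, n_2, n_3, n_4) \in \CS^4$ with $n_1 n_2 n_3 n_4 = \square$ and $P(n_1) = P(n_2) = P(n_3) = P(n_4)$. Setting $T_p = \#\{n \in \CS : P(n)=p\}$, each of the three diagonal pairings $(n_1=n_2,n_3=n_4)$, $(n_1=n_3,n_2=n_4)$, $(n_1=n_4,n_2=n_3)$ forces two of the elements to share the same largest prime, and so contributes at most $\sum_p T_p^2 \le (\max_p T_p)|\CS|$; Lemma~\ref{lem3.1: large prime} gives $\max_p T_p = o(|\CS|)$, so this is $o(|\CS|^2)$. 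The remaining, genuinely non-diagonal tuples form a subset of $E_\square(\CS) - 3|\CS|^2 + O(|\CS|) = o(|\CS|^2)$ by the hypothesis. Hence $\sum_p \E[Z_p^4] = o(1)$. For the second condition, direct expansion gives $\sum_p \E[Z_p^2] = |\CS|^{-1} \sum_p T_p = 1$ and
$$
\sum_{p,q} \E[Z_p^2 Z_q^2] = |\CS|^{-2}\, \#\{(n_1,n_2,n_3,n_4)\in\CS^4: n_1n_2n_3n_4 = \square,\ P(n_1)=P(n_2),\ P(n_3)=P(n_4)\};
$$
here the pairing $n_1=n_2, n_3=n_4$ contributes exactly $|\CS|^2$, while the other two pairings force $P(n_1)=P(n_3)$ and therefore contribute $o(|\CS|^2)$ by Lemma~\ref{lem3.1: large prime}, and non-diagonal tuples are again $o(|\CS|^2)$ by hypothesis. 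Thus $\sum_{p,q} \E[Z_p^2 Z_q^2] = 1 + o(1)$, and expanding shows $\E[(\sum_p Z_p^2 - 1)^2] = o(1)$.

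Feeding these estimates into Theorem~\ref{thm2.3} gives $\E[e^{it\widetilde Z}] = e^{-t^2/2} + o(1)$ for every fixed $t \in \R$, which yields convergence in distribution to a standard real Gaussian. The main obstacle, such as it is, is bookkeeping: the fourth-order square-energy equation admits three diagonal pairings (hence the factor $3$ in the hypothesis) rather than the two pairings that arose in the Steinhaus multiplicative energy, so one must carefully separate each diagonal class from the truly non-diagonal contributions in both error-term computations.
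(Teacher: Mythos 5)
Your proof is correct and follows precisely the route the paper indicates (and omits): the largest-prime-factor martingale decomposition fed into the real-valued CLT of Theorem~\ref{thm2.3}, with the square-energy hypothesis controlling the genuinely non-diagonal tuples, Lemma~\ref{lem3.1: large prime} (together with the lower bound on $|\CA|$) controlling the pairings that force equal largest prime factors, and the three diagonal pairings accounting for the constant $3$. The only trivial point left implicit is that one should discard $n=1$ from $\CS$ so that $P(n)$ is defined, exactly as in Corollary~\ref{cor: indicator}.
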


With suitable modifications to the proofs, the results that we have enunciated for the Steinhaus model would 
extend to the Rademacher case.   We sketch one example, treating the distribution of Rademacher random multiplicative 
functions in short intervals, which is an analogue of Corollary~\ref{thm: 2log2-1 stein} and improves upon the earlier work in \cite{CS}.

	\begin{corollary}\label{thm: 2log2-1 stein-rad}  Let $x$ and $y$ be large, with $x^{1/5}\log x \ll  y\le x/(\log x)^{\alpha -\epsilon}$ where $\alpha= 2\log 2 -1$. Let ${\mathcal T}$ denote the set of all square-free integers in $[x,x+y]$. Then, for a random Rademacher multiplicative function $f$, the quantity 
		$$ 
		\frac{1}{\sqrt{|\mathcal{T}|}} \sum_{ x \le n \le x+y} f(n), 
		$$ 
		is distributed like a standard  normal random variable with mean $0$ and variance $1$. 
	\end{corollary}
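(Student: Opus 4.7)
The plan is to apply Theorem~\ref{cor: general rad} with $\mathcal{A} = \mathcal{T}$, choosing $\mathcal{S} \subset \mathcal{T}$ in direct analogy with Proposition~\ref{prop:typical count}: set $\mathcal{S} = \mathcal{T}$ when $y \le x/(\log x)^{2}$, and otherwise restrict to squarefree $n \in [x, x+y]$ with $\Omega(n) \le (1+\epsilon)\log\log x$. The lower bound $y \gg x^{1/5}\log x$ is precisely what guarantees, via classical work on squarefree integers in short intervals, that $|\mathcal{T}| = (6/\pi^{2}) y (1+o(1))$, so the size hypothesis of Theorem~\ref{cor: general rad} holds; the Shiu-weighted argument in Proposition~\ref{prop:typical count} then gives $|\mathcal{S}| = (1+o(1))|\mathcal{T}|$.

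The heart of the proof is to establish $E_{\square}(\mathcal{S}) = (3+o(1))|\mathcal{S}|^{2}$. The three diagonal pairings of $(n_1, n_2, n_3, n_4)$ contribute $3|\mathcal{S}|^{2} + O(|\mathcal{S}|)$, so the remaining task is to bound the number of non-diagonal solutions of $n_1 n_2 n_3 n_4 = \square$ by $o(y^{2})$. Here I would exploit that each $n_i$ is squarefree: every prime then divides exactly $0$, $2$, or $4$ of the $n_i$. Setting $g_1 = \gcd(n_1, n_2)$ and $g_2 = \gcd(n_3, n_4)$, and writing $n_1 = g_1 \alpha$, $n_2 = g_1 \beta$, $n_3 = g_2 \alpha'$, $n_4 = g_2 \beta'$ with $(\alpha,\beta) = (\alpha',\beta') = 1$, the square condition collapses cleanly to the Steinhaus-type equation $\alpha \beta = \alpha' \beta'$ from Section~\ref{sec: martingale}.

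From here I would mimic the proof of Proposition~\ref{prop:typical count}. Applying Lemma~\ref{lem3.2: parameter} to $\alpha\beta = \alpha'\beta'$, the four constraints $g_1 \alpha, g_1 \beta, g_2 \alpha', g_2 \beta' \in [x, x+y]$ force $g_1 \asymp g_2$ together with $\alpha \asymp \beta$ and $\alpha' \asymp \beta'$. Dyadically partitioning $g_1 \asymp G$, the inner count is dominated by a Section~\ref{sec: martingale}-style sum over the residual variables lying in an interval of length $y/G$ and size $x/G$; the $\Omega$-weighted Shiu argument leading to \eqref{eqn: numbsolun} produces a per-block bound of $\ll 2^{2K} \delta (y/G)^{2} (\log\log x)/\log(x/G)$ with $K = (1+\epsilon)\log\log x$. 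Summing against $\sum_G G^{-2} \ll 1$ then yields a total of $\ll 2^{2K}\delta y^{2} (\log\log x)/\log x$, which is $o(y^{2})$ in the stated range of $y$.

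The main obstacle compared to the Steinhaus case is the bookkeeping for the additional gcd parameters $g_1, g_2$ introduced by the reduction from the square energy to the multiplicative equation, but these sum cleanly thanks to the enforced comparability $g_1 \asymp g_2$ and the convergence of $\sum_G G^{-2}$. Once the square-energy estimate is in hand, Theorem~\ref{cor: general rad} supplies the desired real Gaussian limit for the normalized sum.
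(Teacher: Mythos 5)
Your reduction is the same one the paper uses: set $g_1=\gcd(n_1,n_2)$, $g_2=\gcd(n_3,n_4)$, use squarefreeness to collapse $n_1n_2n_3n_4=\square$ to $u_1u_2=v_1v_2$ for the cofactors, and then invoke Lemma~\ref{lem3.2: parameter}. The gap is in the bookkeeping for the gcd pair, and it is fatal as written. For a \emph{fixed} pair $(g_1,g_2)$ with $g_1\asymp G$, the inner count of off-diagonal solutions of $u_1u_2=v_1v_2$ is indeed $\ll 2^{2K}\delta(y/G)^2\log\log x/\log(x/G)$; but you then sum this per-pair bound over dyadic blocks as though each block contained $O(1)$ pairs (your ``$\sum_G G^{-2}\ll1$''). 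The constraint only forces $g_2/g_1\in[(1+\delta)^{-1},1+\delta]$, so a block with $G\gg 1/\delta$ contains $\asymp \delta G^2$ admissible pairs with $g_1\neq g_2$. Restoring this factor and summing over the $\asymp\log x$ blocks produces an additional term of size $2^{2K}\delta^2y^2\log\log x$, which your stated total $2^{2K}\delta y^2\log\log x/\log x$ omits. Since $2^{2K}\delta^2\asymp(\log x)^{2\log 2}(\log x)^{-2(2\log2-1)+o(1)}=(\log x)^{2-2\log2+o(1)}$ and $2-2\log2>0$, that term is \emph{not} $o(y^2)$ in the relevant range. The fix is exactly what the paper does in its Case 3: the hypothesis $n_i\in\mathcal S$ gives the budget $\Omega(g_1)+\Omega(g_2)+\Omega(g)+\Omega(h)+\Omega(a)+\Omega(b)\le 2K$, so one must retain the weight $2^{-\Omega(g_1)-\Omega(g_2)}$ in the sum over the gcd pair as well; as in \eqref{3.4} this converts the offending $\delta\log x$ from $\sum_{g_1\neq g_2}$ into $\delta\log\log x$ and yields the acceptable total $\frac{2^{2K}y^2}{\log x}(\delta\log\log x)^2$.

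Two further cases your sketch skips. First, ``non-diagonal for $n_1n_2n_3n_4=\square$'' is not the same as ``non-diagonal for $u_1u_2=v_1v_2$'': solutions with $\{u_1,u_2\}=\{v_1,v_2\}$ but $g_1\neq g_2$ (i.e.\ $g=h=1$ in Lemma~\ref{lem3.2: parameter}) are genuinely non-diagonal in the original equation and are missed if you only count off-diagonal solutions of the reduced equation; they amount to off-diagonal solutions of $n_1n_3=n_2n_4$ and must be referred back to Proposition~\ref{prop:typical count} separately (the paper's Case 1), and likewise the case $g_1=g_2$ with $u_1u_2=v_1v_2$ off-diagonal needs its own treatment (the paper's Case 2). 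Second, Shiu's lemma requires the inner interval $[x/G,(x+y)/G]$ to be longer than the square root of its endpoints, so one needs an ordering argument (without loss of generality $\max(g_1,g_2)\ll\max(g,h)\ll\max(a,b)$, whence $\max(g_1,g_2)\ll x^{1/3}$), together with a divisor-bound argument for $g_1=g_2>\sqrt x$; your dyadic decomposition in $G$ up to $\sqrt x$ does not by itself guarantee this applicability.
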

	
In \cite{CS} such a result was established in the range $x^{\frac 15} \log x \ll y = o(x/\log x)$.  The lower bound on $y$ is to ensure that the interval $[x,x+y]$ contains 
$\gg y$ square-free integers (which follows from \cite{FT1992}).  Thus we restrict attention to the range $x/(\log x)^2 \le y \le x/(\log x)^{\alpha -\epsilon}$.  In this range we choose ${\mathcal S}$ to be the set of integers $n \in {\mathcal T}$ with $\Omega(n) \le (1+\epsilon) \log \log x$.  Then, as in the proof of Proposition~\ref{prop:typical count}, we have 
$|{\mathcal T} \backslash {\mathcal S}| = o(y)$, and we need only show that the number of non-diagonal solutions to $n_1 n_2 n_3 n_4= \square$ (with $n_i \in {\mathcal S}$) is 
$o(y^2)$.  

Write, as earlier, $y= \delta x$.  Let $r$ denote the gcd of $n_1$ and $n_2$, and let $s$ denote the gcd of $n_3$ and $n_4$.  Write $n_1= r u_1$, $n_2 = r u_2$, and $n_3= s v_1$ and $n_4= s v_2$.   Now $u_1 u_2$ and $v_1 v_2$ are square-free integers (since $(u_1, u_2) = (v_1, v_2)=1$), and their product is a square, which means that $u_1 u_2= v_1 v_2$.    Using our parametrization in Lemma~\ref{lem3.2: parameter}, write $u_1 =ga$, $u_2 = hb$, $v_1 = gb$, $v_2= ha$, where now we also know that $(g,h)=(a,b)=1$.  Thus we have 
parametrized our solutions to $n_1n_2 n_3 n_4 =\square$ as $n_1 =rga$, $n_2 = rhb$, $n_3= sgb$, $n_4= sha$.  If two out of the three possibilities $r=s$, $g=h$, or $a=b$ 
occur, then we obtain diagonal solutions;  therefore we may assume that at most one of the equalities $r=s$, $g=h$, or $a=b$ can occur.

Since the variables $n_i$ must all lie in $[x,x+y]$, it follows that 
$$ 
\frac{n_1 n_2}{n_3 n_4} = \frac{r^2}{s^2} \in [(1+\delta)^{-2}, (1+\delta)^{2}], \qquad \text{ or }\qquad \frac rs \in [(1+\delta)^{-1}, (1+\delta)]. 
$$ 
In particular, either $r=s$, or both $r$ and $s$ are $\gg 1/\delta$.  Similarly we also find that $g/h$ and $a/b$ must lie in $[(1+\delta)^{-1},(1+\delta)]$, 
so that either $g=h=1$ or $g$, $h \gg 1/\delta$, and either $a=b=1$ or $a$, $b \gg 1/\delta$.

{\bf Case 1: $g=h=1$, or $a=b=1$.}  If $g=h=1$, then we must bound the number of non-diagonal solutions  to $n_1 n_3 = n_2 n_4$, and our work in Proposition~\ref{prop:typical count} shows that this is $o(y^2)$.  Similarly if $a=b=1$ then we have non-diagonal solutions to $n_1 n_4=n_2n_3$, and these again are $o(y^2)$. 

{\bf Case 2: $r=s$.}   This is a little different from Case 1, since we may have $r=s$ without both being necessarily $1$.  Suppose first that $r=s \le \sqrt{x}$.  Here note that we 
are counting off-diagonal solutions to $u_1 u_2 = v_1 v_2$, where $u_1$, $u_2$, $v_1$, $v_2$ are in $[x/r, (x+y)/r]$, and $\Omega(u_1)$, $\Omega(u_2)$, $\Omega(v_1)$, $\Omega(v_2)$ are all below $(1+\epsilon) \log \log x$ which is $\le (1+2\epsilon) \log \log (x/r)$. Therefore, Proposition~\ref{prop:typical count} applies here to show that the number of non-diagonal choices for $u_1$, $u_2$, $v_1$, $v_2$ is $o(y^2/r^2)$, and summing over $r \le \sqrt{x}$ produces a bound of $o(y^2)$ for this count. 

Now suppose that $r=s > \sqrt{x}$.  Here we have $\le (y/r +1)$ choices for $u_1$ and $u_2$, and then $v_1$ and $v_2$ are fixed in $O(d(u_1u_2)) = O(x^{\epsilon})$ ways.  
Therefore these terms contribute  
$$ 
\ll \sum_{\sqrt{x} \le r=s \le x} \Big( \frac {y^2}{r^2} +1 \Big) x^{\epsilon} = o(y^2). 
$$ 

{\bf Case 3: $r\neq s$, $g\neq h$, $a\neq b$.}  We now ignore the conditions that $(g,h)=(a,b)=1$, so that the pairs $r$, $s$; $g$, $h$; and $a$, $b$ are now all on an equal footing. 
Since $\Omega(rga)$, $\Omega(rhb)$, $\Omega(sgb)$, $\Omega(sha)$ are all assumed to be $\le K:=(1+\epsilon) \log \log x$, it follows that $\Omega(r)+\Omega(s) + \Omega(g)+ \Omega(h) + \Omega(a)+\Omega(b) \le 2K$.  Without loss of generality we may assume that $\max (r,s ) \ll \max (g, h) \ll \max(a,b)$, so that $\max(r, s) \ll x^{\frac 13}$, and 
$\max(g,h) \ll (x/\max(r, s))^{\frac 12}$. 
Therefore it is enough to bound 
$$ 
\sum_{\substack{1/\delta \ll r, s \ll x^{\frac 13} \\ r/s \in [(1+\delta)^{-1}, (1+\delta)]}} \ \ \sum_{\substack{ 1/\delta \ll g, h \ll (x/\max(r,s))^{\frac 12} \\ g/h \in [(1+\delta)^{-1}, (1+\delta)]} }
\ \ \sum_{\substack{ a \in [x/(rg), (x+y)/(rg)] \\ b\in [x/(sh),(x+y)/(sh)] } } 2^{2K-\Omega(r)-\Omega(s)-\Omega(g)-\Omega(h) - \Omega(a)-\Omega(b)}.
$$
Using Shiu's Lemma~\ref{Shiu} to bound the sums over $a$ and $b$, we see that the above may be bounded by 
$$ 
\ll \frac{2^{2K} y^2}{\log x} \sum_{\substack{1/\delta \ll r, s \ll x^{\frac 13} \\ r/s \in [(1+\delta)^{-1}, (1+\delta)]}}  \frac{2^{-\Omega(r)-\Omega(s)}}{r^2} 
\sum_{\substack{ 1/\delta \ll g, h \ll x^{\frac 12} \\ g/h \in [(1+\delta)^{-1}, (1+\delta)]} }\frac{2^{-\Omega(g) -\Omega(h)}}{g^2}.
$$ 
The sums over $r$, $s$, and $g$, $h$ above are nearly identical to the sums over $g$ and $h$ appearing in \eqref{3.4}, and thus may be bounded 
by $\ll (\delta \log \log x)$ (using our work leading up to \eqref{eqn: numbsolun}).  Thus, we conclude that the number of non-diagonal solutions counted in this case is 
$$ 
\ll \frac{2^{2K} y^2}{\log x} (\delta \log \log x)^2 = o(y^2).  
$$

This completes our proof of Corollary~\ref{thm: 2log2-1 stein-rad}.

	\bibliographystyle{abbrv}
	\bibliography{CLT}{}
\end{document}